\newtheorem{theorem}{Theorem}[section]
\newtheorem{lemma}[theorem]{Lemma}
\newtheorem{proposition}[theorem]{Proposition}
\numberwithin{figure}{section}
\theoremstyle{definition}
\newtheorem{example}[theorem]{Example}
\theoremstyle{remark}
\newtheorem{remark}[theorem]{Remark}
\numberwithin{equation}{section}
	\DeclareMathOperator{\dist}{dist}
	\DeclareMathOperator{\loc}{loc}
	\DeclareMathOperator*{\esssup}{ess\,sup}
\begin{document}

\title[Composition operators with applications to Neumann eigenvalues]{Space quasiconformal composition operators with applications to Neumann eigenvalues}

\author{V.~Gol'dshtein, R.~Hurri-Syrj\"anen, V.~Pchelintsev, A.~Ukhlov}

\begin{abstract}
In this article we obtain estimates of Neumann eigenvalues of $p$-Laplace operators in a large class of space domains satisfying  quasihyperbolic boundary conditions. The suggested method is based on composition operators generated by quasiconformal mappings and their applications to Sobolev-Poincar\'e-inequalities.  By using a sharp version of the inverse H\"older inequality we refine our estimates for quasi-balls, that is, images of balls under quasiconformal mappings of the whole space.
\end{abstract}

\maketitle
\footnotetext{\textbf{Key words and phrases:} elliptic equations, Sobolev spaces, quasiconformal mappings.}
\footnotetext{\textbf{2010
Mathematics Subject Classification:} 35P15, 46E35, 30C65.}

\section{Introduction}

The article is devoted to  applications of the space quasiconformal mappings theory to the spectral theory of elliptic operators. Applications are based on the geometric theory of composition operators on Sobolev spaces \cite{GU10,U93,VU02} in the special case of operators generated by quasiconformal mappings.
Composition operators on Sobolev spaces permit us to give estimates of norms for embedding operators of Sobolev spaces into Lebesgue spaces in a  large class of space domains that includes domains with H\"older singularities \cite{GU17,GU19}. Quasiconformal mappings allow us to describe the important subclass of these embedding domains in the terms of quasihyperbolic geometry. It permits us to obtain estimates of Neumann eigenvalues of the $p$-Laplace operator, $p> n$, in domains with quasihyperbolic boundary conditions. Note, that domains with quasihyperbolic boundary conditions are important subclass of Gromov hyperbolic domains \cite{BHK}. 

These estimates are refined for $K$-quasi-balls, that is, images of the ball $\mathbb B \subset\mathbb R^n$ under $K$-quasiconformal mappings $\varphi:\mathbb R^n\to\mathbb R^n$ with the help of a sharp (with constant estimates) reverse H\"older inequality. We prove the constant estimates in the  reverse H\"older inequality on the base of the quasiconformal mapping theory \cite{BI} and the non-linear potential theory \cite{HKM,M}.


Namely we prove that if a domain $\Omega\subset\mathbb R^n$, $n\geq 3$, satisfying the $\gamma$-quasihyperbolic boundary condition, is  a $K$-quasi-ball, then for $p>n$
$$
\mu_p(\Omega)\geq \frac{M_p(K,\gamma)}{R_{*}^p},
$$
where $R_{\ast}$ is a radius of a ball $\Omega^{\ast}$ of the same measure as $\Omega$ and $M_p(K,\gamma)$ depends only on $p,\gamma$ and a quasiconformity coefficient $K$ of $\Omega$. The exact value of $M_p(K,\gamma)$ is given in Theorem~\ref{main}.

Estimates of the first non-trivial Neumann eigenvalue of the $p$-Laplace operator, $p>2$, are known for convex domains
$\Omega\subset\mathbb R^n$  \cite{ENT}:
$$
\mu_p(\Omega) \geq \left(\frac{\pi_p}{d(\Omega)}\right)^p ,
$$
where $d(\Omega)$ is a diameter of a convex domain $\Omega$ and $\pi_p=2 \pi {(p-1)^{\frac{1}{p}}}/({p \sin(\pi/p))}$.

Unfortunately in  non-convex domains $\mu_p(\Omega)$ can not be characterized in the terms of Euclidean diameters. This can be seen by considering a domain consisting of two identical squares connected by a thin corridor \cite{BCDL16}. The method of the composition operators on Sobolev spaces in connection with embedding theorems in convex domains \cite{GG94,GU09} allows us to obtain estimates of Neumann eigenvalues of the $p$-Laplace operator in non-convex domains including some domains with H\"older singularities and some fractal type domains \cite{GU16,GU17}.

In the case of composition operators generated by quasiconformal mappings the main estimates of norms of embedding operators from Sobolev spaces with first derivatives into Lebesgue spaces were reformulated in the terms of integrals of quasiconformal derivatives \cite{GH-SU18,GPU18_1,GPU18}. This type of global integrability of quasiconformal derivatives depends on the quasiconformal geometry of domains \cite{G73}. One of the possible geometric reinterpretation of the quasiconformal geometry is the growth condition for quasihyperbolic metric \cite{AK, HS}.

Recall that a domain $\Omega$ satisfies the $\gamma$-quasihyperbolic boundary condition with some $\gamma>0$, if the growth condition on the quasihyperbolic metric
$$
k_{\Omega}(x_0,x)\leq \frac{1}{\gamma}\log\frac{\dist(x_0,\partial\Omega)}{\dist(x,\partial\Omega)}+C_0
$$
is satisfied for all $x\in\Omega$, where $x_0\in\Omega$ is a fixed base point and $C_0=C_0(x_0)<\infty$,
\cite{GM, H1, HSMV, KOT}.

In \cite{AK} it was proved that Jacobians $J_{\varphi}$ of  quasiconformal mappings $\varphi: \mathbb{B} \to \Omega$ belong to $L_{\beta}(\mathbb{B})$ for some $\beta>1$ if and only if $\Omega$ satisfy to a $\gamma$-quasihyperbolic boundary conditions for some $\gamma$.

Hence the (quasi)conformal mapping theory allows to obtain spectral estimates in domains with quasihyperbolic boundary conditions. Because we need the exact value of the integrability exponent $\beta$ for quasiconformal Jacobians, we consider an equivalent class of $\beta$-quasiconformal regular domains \cite{GPU19}, namely the class of domains
$$
\left\{\Omega\subset\mathbb R^n : \Omega=\varphi(\mathbb B)\,\,\,\text{with}\,\,\, J_{\varphi}\in L_{\beta}(\mathbb{B})\right\}.
$$

An important subclass of $\beta$-quasiconformal regular domains are quasi-balls, because
Jacobians of quasiconformal mappings are $A_p$-weights \cite{S93} and they satisfy the reverse H\"older inequality \cite{BI}. Methods of the harmonic analysis allow us to refine estimates of Neumann eigenvalues in the case of quasi-balls. The main technical difficulties are calculation (estimation) of exact constants in the reverse H\"older inequality. We solve this problem using capacitary (moduli) estimates. These estimates permit us to obtain integral estimates of quasiconformal derivatives (Jacobians) in the unit ball $\mathbb B$. Hence we can refine estimates of the Neumann eigenvalues for quasi-balls. Note that quasi-balls include some fractal type domains.

In the two-dimensional case $\mathbb R^2$ this approach is more accurate \cite{GPU18_2,GPU18} because exact exponents of local integrability of planar quasiconformal Jacobians are known \cite{A94,G81}.

\section{Composition operators on Sobolev spaces}

\subsection{Sobolev spaces}

Let $\Omega$ is an open subset of $\mathbb R^n$, $n\geq 2$. The Sobolev space $W^1_p(\Omega)$, $1\leq p\leq\infty$ is defined
as a Banach space of locally integrable weakly differentiable functions
$f:\Omega\to\mathbb{R}$ equipped with the following norm:
\[
\|f\mid W^1_p(\Omega)\|=\| f\mid L_p(\Omega)\|+\|\nabla f\mid L_p(\Omega)\|,
\]
where $\nabla f$ is the weak gradient of the function $f$. The homogeneous seminormed Sobolev space $L^1_p(\Omega)$, $1\leq p\leq\infty$ is  equipped with the seminorm:
\[
\|f\mid L^1_p(\Omega)\|=\|\nabla f\mid L_p(\Omega)\|.
\]

We consider the Sobolev spaces as Banach spaces of equivalence classes of functions up to a set of $p$-capacity zero \cite{M}.

Recall that a homeomorphism $\varphi: \Omega\to \Omega'$ is called a $K$-quasiconformal mapping if $\varphi\in W^1_{n,\loc}(\Omega)$ and there exists a constant $1\leq K<\infty$ such that
$$
|D\varphi(x)|^n\leq K |J(x,\varphi)|\,\,\text{for almost all}\,\,x\in\Omega.
$$
The quasiconformal mappings are mappings of finite distortion and possess the Luzin $N$-property, that is, an image of a set of measure zero has measure zero.

Let $\Omega$ and $\Omega'$ be domains in $\mathbb R^n$, that is, open and connected sets.  Then $\Omega$ and $\Omega'$ are called $K$-quasiconformal equivalent domains if there exists a $K$-quasiconformal homeomorphism $\varphi: \Omega\to \Omega'$. Note that in $\mathbb R^2$ any two simply connected domains are quasiconformal equivalent domains \cite{Ahl66}.

In the following theorems we obtain the estimate of the norm of the composition operator on Sobolev spaces in quasiconformal equivalent domains with finite measure. The case of planar conformal mappings was considered in \cite{GPU18}.

\begin{theorem}
\label{NormCO}
Let $\Omega, \Omega'\subset \mathbb R^n$, $n\geq 2$, be $K$-quasiconformal equivalent domains with finite measure. Then a $K$-quasiconformal mapping $\varphi: \Omega\to \Omega'$ generates a bounded composition operator
\[
\varphi^{*}:L^1_p(\Omega') \to L^1_q(\Omega)
\]
for any $p \in (n,\, +\infty)$ and $q\in[1,\, n]$ with $K_{p,q}(\Omega)=K^{\frac{1}{n}} |\Omega'|^{\frac{p-n}{np}} |\Omega|^{\frac{n-q}{nq}}$.
\end{theorem}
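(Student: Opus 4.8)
The strategy is to track how the Sobolev seminorm of a function $f \in L^1_p(\Omega')$ transforms under the pullback $f \mapsto f \circ \varphi$, exploiting both the distortion inequality $|D\varphi(x)|^n \le K |J(x,\varphi)|$ and the finiteness of the measures of $\Omega$ and $\Omega'$. First I would recall that a $K$-quasiconformal map $\varphi \in W^1_{n,\mathrm{loc}}(\Omega)$ is differentiable a.e., satisfies the Luzin $N$-property, and that for a.e.\ $x$ the chain rule $\nabla(f\circ\varphi)(x) = D\varphi(x)^{T}\,(\nabla f)(\varphi(x))$ holds; this is standard for compositions with quasiconformal (hence finite-distortion, ACL) mappings and $f$ Lipschitz, and extends to $f \in L^1_p(\Omega')$ with $p > n$ by approximation since such $f$ is locally Hölder continuous. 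Thus pointwise $|\nabla(f\circ\varphi)(x)| \le |D\varphi(x)|\,|(\nabla f)(\varphi(x))|$.

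Next I would estimate the $L_q(\Omega)$ norm of the left side. Raising to the $q$-th power and integrating,
\[
\int_{\Omega} |\nabla(f\circ\varphi)(x)|^q\,dx \le \int_{\Omega} |D\varphi(x)|^q\, |(\nabla f)(\varphi(x))|^q\,dx .
\]
Since $q \le n$, write $|D\varphi(x)|^q = \big(|D\varphi(x)|^n\big)^{q/n} \le \big(K|J(x,\varphi)|\big)^{q/n}$. Now apply Hölder's inequality in $\Omega$ with exponents $p/q > 1$ (on the factor $|(\nabla f)(\varphi(x))|^q$) and its conjugate $p/(p-q)$ (on the factor $K^{q/n}|J(x,\varphi)|^{q/n}$). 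The first factor becomes $\big(\int_\Omega |(\nabla f)(\varphi(x))|^p\,dx\big)^{q/p}$, and after the change of variables $y = \varphi(x)$ — legitimate because $\varphi$ has the $N$-property and is a.e.\ differentiable, so $\int_\Omega g(\varphi(x))|J(x,\varphi)|\,dx = \int_{\Omega'} g(y)\,dy$; here there is no Jacobian weight, but one bounds $\int_\Omega |(\nabla f)(\varphi(x))|^p dx \le \int_{\Omega'} |\nabla f(y)|^p dy$ using that a quasiconformal map between domains of finite measure is, after accounting for multiplicity one, measure-nonincreasing in the required averaged sense; more safely, one inserts $|J|^{q/n}$ into the change of variables directly — I will handle this bookkeeping in the Hölder split so that the Jacobian powers combine to exactly $|J(x,\varphi)|$ under the integral. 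The second Hölder factor is $K^{q/(np/(p-q))\cdot\ldots}$ times $\big(\int_\Omega |J(x,\varphi)|^{\frac{q}{n}\cdot\frac{p}{p-q}}\,dx\big)^{(p-q)/p}$, and one checks the exponent $\frac{q}{n}\cdot\frac{p}{p-q}$ together with the companion exponent from the first factor is arranged (by choosing the split point) so that the Jacobian appears to the first power after change of variables, yielding $|\Omega'|$, while the leftover powers of $|J|^0 = 1$ integrate to give $|\Omega|$. Collecting exponents gives the $K$-power $K^{1/n}$, the $\Omega'$-power $|\Omega'|^{(p-n)/(np)}$, and the $\Omega$-power $|\Omega|^{(n-q)/(nq)}$, i.e.\ exactly $K_{p,q}(\Omega)$.

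The main obstacle is organizing the Hölder exponents and the change of variables so that the Jacobian enters to precisely the first power (so the measure $|\Omega'|$ appears cleanly) while the remaining integrand is identically $1$ over $\Omega$ (producing $|\Omega|$); a naive split leaves a fractional power of $|J|$ that cannot be integrated without extra hypotheses. The resolution is to do a \emph{three-factor} Hölder inequality (or equivalently, first split off $|J(x,\varphi)|^{q/n}$ against the rest, change variables to convert it to $|\Omega'|^{q/n}$, then within the remaining integral apply Hölder with exponents $p/q$ and $p/(p-q)$ against the constant $1$), and to verify that the resulting exponents on $K$, $|\Omega'|$, $|\Omega|$ simplify to $\tfrac1n$, $\tfrac{p-n}{np}$, $\tfrac{n-q}{nq}$ respectively. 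A secondary technical point is justifying the change of variables and chain rule for merely weakly differentiable $f$; this is dispatched by approximating $f$ by smooth functions in $L^1_p(\Omega')$ and using the continuity (Morrey embedding, since $p>n$) to pass to the limit, together with the Luzin $N$-property of $\varphi$ to control null sets. Once these are in place, the boundedness of $\varphi^{*}: L^1_p(\Omega') \to L^1_q(\Omega)$ with the stated norm bound follows immediately.
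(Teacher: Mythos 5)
Your plan reaches the stated constant, but by a genuinely different route from the paper. The paper never touches the test function $f$: it estimates the quantity $K_{p,q}(\Omega)=\bigl(\int_\Omega (|D\varphi(x)|^p/|J(x,\varphi)|)^{q/(p-q)}\,dx\bigr)^{(p-q)/(pq)}$ directly --- first using $|D\varphi|^n\le K|J|$ to reduce to $\int_\Omega |D\varphi|^{(p-n)q/(p-q)}dx$, then H\"older against the constant $1$ (legitimate since $(p-n)q/(p-q)\le n$ when $q\le n$), then $\int_\Omega|D\varphi|^n\,dx\le K\int_\Omega|J|\,dx\le K|\Omega'|$ --- and then quotes the composition-operator theorem of \cite{U93,VU02}, which says that finiteness of $K_{p,q}(\Omega)$ gives boundedness of $\varphi^*\colon L^1_p(\Omega')\to L^1_q(\Omega)$ with norm controlled by it. What you propose is in effect to re-prove the sufficiency half of that cited theorem by hand (a.e.\ chain rule, H\"older, change of variables) with the distortion inequality inserted along the way; this buys self-containedness at the price of the technicalities (chain rule for weakly differentiable $f$, approximation, Luzin $N$-property) that the citation packages. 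The computation does close: with the three-factor split
\begin{equation*}
K^{q/n}|J|^{q/n}\,|(\nabla f)\circ\varphi|^q
=K^{q/n}\Bigl(|(\nabla f)\circ\varphi|^q|J|^{q/p}\Bigr)\cdot|J|^{\frac{q(p-n)}{np}}\cdot 1,
\end{equation*}
and H\"older exponents $p/q$, $np/(q(p-n))$, $n/(n-q)$, the three factors become $\bigl(\int_{\Omega'}|\nabla f|^p dy\bigr)^{q/p}$, $\bigl(\int_\Omega|J|\,dx\bigr)^{q(p-n)/(np)}\le|\Omega'|^{q(p-n)/(np)}$ and $|\Omega|^{(n-q)/n}$, giving exactly $K^{1/n}|\Omega'|^{(p-n)/(np)}|\Omega|^{(n-q)/(nq)}$.

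Two points in your write-up need repair, though. First, the auxiliary bound $\int_\Omega|(\nabla f)(\varphi(x))|^p dx\le\int_{\Omega'}|\nabla f(y)|^p dy$ that you float is false in general: a quasiconformal map is not measure-nonincreasing (its Jacobian may be small on most of $\Omega$), so the change of variables must carry the weight $|J|$; you do flag this and retreat to the safer route, which is fine. Second, your ``equivalently'' phrasing of the fix --- split off $|J|^{q/n}$ against the rest, convert it to $|\Omega'|^{q/n}$, then H\"older the remainder against the constant $1$ --- is not equivalent and does not work: the remaining factor is again an integral of $|(\nabla f)\circ\varphi|$ to some power with no Jacobian weight, so it cannot be transported to $\Omega'$ (and it degenerates entirely at $q=n$); moreover $|\Omega'|$ would enter with the wrong exponent $q/n$ rather than $q(p-n)/(np)$. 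Only the three-factor split displayed above (equivalently: attach $|J|^{q/p}$ to the gradient factor and leave $|J|^{q/n-q/p}$ for the middle factor) does the bookkeeping correctly; with that split made explicit, your argument is complete and matches the paper's constant.
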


\begin{proof}
By using the quasiconformal inequality $|D\varphi(x)|^n\leq K |J(x,\varphi)|$ for almost all $x\in\Omega$ we obtain
\begin{equation*}
K_{p,q}(\Omega)=\left(\int\limits_\Omega \left(\frac{|D\varphi(x)|^p}{|J(x,\varphi)|}\right)^\frac{q}{p-q}~dx\right)^\frac{p-q}{pq} \\
\leq K^{\frac{1}{p}}
\left(\int\limits_\Omega |D\varphi(x)|^{\frac{(p-n)q}{p-q}}~dx\right)^\frac{p-q}{pq}.
\end{equation*}
Note that if $q \leq n$ then the quality $(p-n)q/(p-q) \leq n$. Hence applying the
H\"older inequality to the last integral we have
\begin{multline*}
\left(\int\limits_\Omega |D\varphi(x)|^{\frac{(p-n)q}{p-q}}~dx\right)^\frac{p-q}{pq}
\leq \left[
\left(\int\limits_\Omega |D\varphi(x)|^n~dx\right)^{\frac{(p-n)q}{n(p-q)}}
\left(\int\limits_\Omega dx\right)^{\frac{(n-q)p}{n(p-q)}}
\right]^\frac{p-q}{pq} \\
\leq K^{\frac{p-n}{np}} \left[
\left(\int\limits_\Omega |J(x,\varphi)|~dx\right)^{\frac{(p-n)q}{n(p-q)}}
\left(\int\limits_\Omega dx\right)^{\frac{(n-q)p}{n(p-q)}}
\right]^\frac{p-q}{pq}.
\end{multline*}
By the condition of the theorem,  $\Omega$ and $\Omega'$ are Euclidean  domains with
finite measure and therefore
\[
K_{p,q}(\Omega)\leq K^{\frac{1}{p}}K^{\frac{p-n}{np}} |\Omega'|^{\frac{p-n}{np}} |\Omega|^{\frac{n-q}{nq}} = K^{\frac{1}{n}} |\Omega'|^{\frac{p-n}{np}} |\Omega|^{\frac{n-q}{nq}}<\infty.
\]

Hence, by \cite{U93,VU02} we obtain that a composition operator
\[
\varphi^{*}:L^1_p(\Omega') \to L^1_q(\Omega)
\]
is bounded for any $p \in (n,\, +\infty)$ and $q\in[1,\, n]$.
\end{proof}

Let $\Omega$ and $\Omega'$ be domains in $\mathbb R^n$, $n\geq 2$. Then a domain $\Omega'$ is called a $K$-quasi\-con\-for\-mal $\beta$-regular domain about a domain $\Omega$ if there exists a $K$-quasiconformal mapping $\varphi : \Omega \to \Omega'$ such that
$$
\int\limits_\Omega |J(x, \varphi)|^{\beta}~dx < \infty \quad\text{for some}\quad \beta >1,
$$
where $J(x,\varphi)$ is a Jacobian of a $K$-quasiconformal mapping $\varphi : \Omega \to \Omega'$.
The domain $\Omega' \subset \mathbb{R}^n$ is called a quasiconformal regular domain if it is a $K$-quasiconformal $\beta$-regular domain for some $\beta>1$.

Note that the class of quasiconformal regular domains includes the class of Gehring domains \cite{AK, HS} and can be described in terms of quasihyperbolic geometry \cite{GM, H1, KOT}.



Now we establish a connection between quasiconformal $\beta$-regular domains
and the quasiconformal composition operators on Sobolev spaces.

\begin{theorem}\label{th-conn}
Let $\Omega, \Omega'\subset \mathbb R^n$, $n\geq 2$, be domains. Then $\Omega'$ is a $K$-quasiconformal $\beta$-regular domain about a domain $\Omega$ if and only if a $K$-quasiconformal mapping $\varphi: \Omega\to \Omega'$ generates a bounded composition operator
\[
\varphi^{*}:L^1_p(\Omega') \to L^1_q(\Omega)
\]
for any $p \in (n, +\infty)$ and $q=p\beta/(p+\beta -n)$.
\end{theorem}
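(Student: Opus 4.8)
The plan is to obtain both implications from the analytic description of bounded composition operators on homogeneous Sobolev spaces, and then to convert the resulting operator-distortion integral into the $\beta$-integrability of the quasiconformal Jacobian by means of the pointwise quasiconformal inequality.

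First I would recall the composition-operator criterion of \cite{U93,VU02}: for $1\le q<p<\infty$, a homeomorphism $\varphi:\Omega\to\Omega'$ lying in $W^1_{1,\loc}(\Omega)$, of finite distortion and with the Luzin $N$-property, generates a bounded operator $\varphi^{*}:L^1_p(\Omega')\to L^1_q(\Omega)$ if and only if
\[
K_{p,q}(\Omega)=\left(\int\limits_\Omega\left(\frac{|D\varphi(x)|^p}{|J(x,\varphi)|}\right)^{\frac{q}{p-q}}dx\right)^{\frac{p-q}{pq}}<\infty ,
\]
and in that case $\|\varphi^{*}\|\le K_{p,q}(\Omega)$. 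A $K$-quasiconformal mapping $\varphi:\Omega\to\Omega'$ belongs to $W^1_{n,\loc}(\Omega)\subset W^1_{1,\loc}(\Omega)$, is a mapping of finite distortion and possesses the Luzin $N$-property (Section~2), and for $p>n$ and $q=p\beta/(p+\beta-n)$ one checks at once that $1<q<p$, so the criterion is applicable. Hence the theorem reduces to the assertion that $K_{p,q}(\Omega)<\infty$ if and only if $\int_\Omega|J(x,\varphi)|^{\beta}\,dx<\infty$.

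The bridge between these two conditions is the two-sided comparison
\[
\tfrac{1}{K}\,|D\varphi(x)|^{n}\le|J(x,\varphi)|\le|D\varphi(x)|^{n}\qquad\text{for a.e. }x\in\Omega ,
\]
whose left inequality is the defining $K$-quasiconformal inequality and whose right inequality is the elementary bound $\det\le(\text{operator norm})^{n}$ at a point of differentiability. Consequently $|J(x,\varphi)|$ is comparable to $|D\varphi(x)|^{n}$ with constants depending only on $K$, and substituting this into the integrand of $K_{p,q}(\Omega)$ shows that
\[
\left(\frac{|D\varphi(x)|^p}{|J(x,\varphi)|}\right)^{\frac{q}{p-q}}\asymp|J(x,\varphi)|^{\frac{(p-n)q}{n(p-q)}},
\]
with comparison constants depending only on $n,p,q,K$. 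For the value of $q$ fixed in the statement the exponent $\tfrac{(p-n)q}{n(p-q)}$ equals $\beta$, so $K_{p,q}(\Omega)^{pq/(p-q)}$ is comparable to $\int_\Omega|J(x,\varphi)|^{\beta}\,dx$, and the two integrals are finite simultaneously. Using the upper estimate gives the implication ``$K$-quasiconformal $\beta$-regular $\Rightarrow$ $\varphi^{*}$ bounded'' (together with a quantitative bound for $\|\varphi^{*}\|$ in terms of $K$ and $\|J(\cdot,\varphi)\mid L_{\beta}(\Omega)\|$), while the lower estimate gives the converse.

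Thus the argument is a reduction followed by exponent bookkeeping. The only points requiring attention are the verification that the quasiconformal mapping satisfies the structural hypotheses ($W^1_{1,\loc}$-regularity, finite distortion, Luzin $N$) under which the characterization of \cite{U93,VU02} is valid, and the arithmetic linking $q$, $p$, $\beta$ and $n$; I do not expect any deeper analytic obstacle.
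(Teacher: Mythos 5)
Your overall route coincides with the paper's: both arguments reduce the statement to the analytic criterion of \cite{U93,VU02} (boundedness of $\varphi^{*}:L^1_p(\Omega')\to L^1_q(\Omega)$ is equivalent to $K_{p,q}(\Omega)<\infty$ for the class of finite-distortion homeomorphisms with the Luzin $N$-property, to which quasiconformal mappings belong) and then use the two-sided comparison $K^{-1}|D\varphi(x)|^{n}\le|J(x,\varphi)|\le|D\varphi(x)|^{n}$ to convert the distortion integral into an integral of a power of the Jacobian. The difficulty is in your ``exponent bookkeeping'', which is precisely the step on which the whole proof hinges. With $q=p\beta/(p+\beta-n)$ a direct computation gives
\[
p-q=\frac{p(p-n)}{p+\beta-n},\qquad \frac{(p-n)q}{p-q}=\beta,\qquad\text{hence}\qquad \frac{(p-n)q}{n(p-q)}=\frac{\beta}{n},
\]
not $\beta$ as you assert. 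Consequently your comparison
\[
\left(\frac{|D\varphi(x)|^{p}}{|J(x,\varphi)|}\right)^{\frac{q}{p-q}}\asymp |J(x,\varphi)|^{\frac{(p-n)q}{n(p-q)}}
\]
identifies $K_{p,q}(\Omega)^{pq/(p-q)}$ with $\int_\Omega|J(x,\varphi)|^{\beta/n}\,dx$, so what your argument actually establishes is ``$\varphi^{*}$ bounded $\iff \int_\Omega|J(x,\varphi)|^{\beta/n}\,dx<\infty$''. That is not the $\beta$-regularity condition $\int_\Omega|J(x,\varphi)|^{\beta}\,dx<\infty$ of the statement, and the two are not interchangeable: since $\beta/n<\beta$ for $n\ge 2$, H\"older's inequality yields $\int_\Omega|J|^{\beta/n}\,dx<\infty$ from $\int_\Omega|J|^{\beta}\,dx<\infty$ only when $|\Omega|<\infty$ (which is not assumed in the theorem), and the converse implication fails altogether.

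I should add, so that you do not look for a trick you missed: the paper's own proof stops at the quantity $\int_\Omega|D\varphi(x)|^{\beta}\,dx$ with $\beta=(p-n)q/(p-q)$ and passes to the $\beta$-regularity condition without comment; since $|D\varphi|^{\beta}\asymp|J|^{\beta/n}$, that argument carries exactly the same $n$-fold discrepancy in the exponent --- your explicit computation merely brings it to the surface. To make the equivalence genuinely close one must either change the relation between $q$ and $\beta$ to $q=np\beta/(p+n(\beta-1))$, for which $(p-n)q/(n(p-q))=\beta$, or interpret the regularity hypothesis as an integrability condition on $|D\varphi|$ (equivalently on $|J|^{1/n}$) rather than on $|J|$. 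As written, the final step of your proposal asserts a false identity and the stated conclusion does not follow from your chain of comparabilities.
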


\begin{proof}
Because $\varphi$ is a quasiconformal mapping, then $\varphi\in W^1_{n,\loc}(\Omega)$ and Jacobian $J(x,\varphi)\ne 0$ for almost all $x\in\Omega$. Hence the $p$-dilatation
$$
K_p(x)=\frac{|D\varphi(x)|^p}{|J(x,\varphi)|}
$$
is well defined for almost all $x\in\Omega$ and so $\varphi$ is a mapping of finite distortion.

Let $\Omega'$ be a $K$-quasiconformal $\beta$-regular domain about a domain $\Omega$. Then there exists $K$-quasiconformal mapping $\varphi: \Omega\to\Omega'$ such that
$$
\int\limits_\Omega |J(x, \varphi)|^{\beta}~dx < \infty \quad\text{for some}\quad \beta >1.
$$

Taking into account the quasiconformal inequality $|D\varphi(x)|^n\leq K |J(x,\varphi)|$ for almost all $x\in\Omega$ we obtain
\begin{multline*}
K_{p,q}^{\frac{pq}{p-q}}(\Omega)=\int\limits_\Omega \left(\frac{|D\varphi(x)|^p}{|J(x,\varphi)|}\right)^\frac{q}{p-q}~dx=
\int\limits_\Omega \left(\frac{|D\varphi(x)|^n}{|J(x,\varphi)|}|D\varphi(x)|^{p-n}\right)^\frac{q}{p-q}~dx\\
\leq
K^{\frac{q}{p-q}}\int\limits_\Omega \left(|D\varphi(x)|^{p-n}\right)^\frac{q}{p-q}~dx =
K^{\frac{q}{p-q}}\int\limits_\Omega |D\varphi(x)|^{\beta}~dx<\infty,
\end{multline*}
for $\beta=(p-n)q/(p-q)$. Hence by \cite{U93,VU02} we have a bounded composition operator
$$
\varphi^*: L_p^1(\Omega') \to L_q^1(\Omega)
$$
for any $p \in (n\,, +\infty)$ and $q=p\beta/(p+\beta -n)$.

Let us check that $q<p$. Because $p>n$ we have that $p+\beta -n> \beta >1$ and so $\beta /(p+\beta -n) <1$. Hence we obtain  $q<p$.

Assume that the composition operator
$$
\varphi^*: L_p^1(\Omega') \to L_q^1(\Omega),\,\,q<p,
$$
is bounded for any $p \in (n\,, +\infty)$ and $q=p\beta/(p+\beta -n)$.
Then, given the  Hadamard inequality:
$$
|J(x,\varphi)|\leq |D\varphi(x)|^n\,\,\text{for almost all}\,\, x\in\Omega,
$$
and by \cite{U93,VU02} we have
$$
\int\limits _{\Omega}|D\varphi(x)|^{\beta}~dx=\int\limits _{\Omega}|D\varphi(x)|^{\frac{(p-n)q}{p-q}}~dx\leq
\int\limits_\Omega \left(\frac{|D\varphi(x)|^{p}}{|J(x,\varphi)|}\right)^\frac{q}{p-q}~dx<+\infty.
$$
\end{proof}

\begin{remark}
In the case of bounded domains $\Omega, \Omega'\subset \mathbb R^n$,
$n\geq 2$, Theorem~\ref{th-conn} is correct for
any $p \in (n, +\infty)$ and any $q\leq p\beta/(p+\beta -n)$.

If $q> n-1$, then by the duality composition theorem \cite{U93}, the inverse mapping $\varphi^{-1}$ induces a bounded composition operator from
$L^1_{q'}(\Omega)$ to $L^1_{p'}(\Omega')$, where $p'={p}/{(p-(n-1))}$ and $q'={q}/{(q-(n-1))}$.
\end{remark}

\section{Sobolev-Poincar\'e inequalities}

\subsection{Two-weight Sobolev-Poincar\'e inequalities} Let $\Omega \subset \mathbb R^n$, $n \geq 2$,
be a domain and let $h : \Omega \to \mathbb R$ be a real valued locally integrable function such that $h(x)>0$ a.e. in $\Omega$. We consider the weighted Lebesgue space $L_p(\Omega,h)$, $1\leq p<\infty$, as the space
of measurable functions $f: \Omega \to \mathbb R$  with the finite norm
$$
\|f\,|\,L_{p}(\Omega,h)\|:= \left(\int\limits_\Omega|f(x)|^ph(x)~dx \right)^{\frac{1}{p}}< \infty.
$$
It is a Banach space for the norm $\|f\,|\,L_{p}(\Omega,h)\|$.

On the basis of Theorem~\ref{NormCO} we prove the existence of two-weight Sobolev-Poincar\'e inequalities in quasiconformal equivalent  bounded space domains.

Recall that a bounded domain $\Omega\subset\mathbb R^n$ is called a $(r,q)$-Sobolev-Poincar\'e domain, $1\leq r,q\leq \infty$, if for any function $f\in L^1_q(\Omega)$, the $(r,q)$-Sobolev-Poincar\'e inequality
$$
\inf\limits_{c\in\mathbb R}\|f-c\mid L_r(\Omega)\|\leq B_{r,q}(\Omega)\|\nabla f\mid L_q(\Omega)\|
$$
holds. Note that bounded Lipschitz domains $\Omega\subset\mathbb R^n$ are $(r,q)$-Sobolev-Poincar\'e domains, for $1\leq r\leq nq/(n-q)$, if $1\leq q<n$, and for any $r\geq 1$, if $q\geq n$ (see, for example, \cite{M}).

\begin{theorem}\label{Th3.1}
Let $\Omega, \Omega'\subset \mathbb R^n$, $n\geq 2$, be $K$-quasiconformal equivalent bounded domains and let $h(y)=|J(y,\varphi^{-1})|$ be the quasiconformal weight defined by a $K$-quasiconformal mapping $\varphi:\Omega\to\Omega'$. Suppose that $\Omega$ be a $(r,q)$-Sobolev-Poincar\'e domain, then for any function $f\in W_p^1(\Omega')$, $p>n$, the inequality
\[
\inf\limits_{c \in \mathbb R}\left(\int\limits_{\Omega'} |f(y)-c|^rh(y)~dy\right)^{\frac{1}{r}} \leq B_{r,p}(\Omega',h)
\left(\int\limits_{\Omega'} |\nabla f(y)|^p~dy\right)^{\frac{1}{p}}
\]
holds for any $1\leq r \leq nq/(n-q)$ with the constant
$$
B_{r,p}(\Omega,h) \leq \inf\limits_{q \in [1, n]} \left\{B_{r,q}(\Omega) |\Omega|^{\frac{n-q}{nq}}\right\} K^{\frac{1}{n}} |\Omega'|^{\frac{p-n}{np}}.
$$
\end{theorem}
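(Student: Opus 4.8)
The plan is to reduce the two-weight inequality on $\Omega'$ to the ordinary (unweighted) $(r,q)$-Sobolev--Poincar\'e inequality on $\Omega$ via the change of variables $y=\varphi(x)$, using Theorem~\ref{NormCO} to control the gradient term. First I would fix $f\in W^1_p(\Omega')$ with $p>n$, set $g=\varphi^{*}f=f\circ\varphi$, and observe that by Theorem~\ref{NormCO} the composition operator $\varphi^{*}:L^1_p(\Omega')\to L^1_q(\Omega)$ is bounded for every $q\in[1,n]$ with norm at most $K^{1/n}|\Omega'|^{(p-n)/(np)}|\Omega|^{(n-q)/(nq)}$; in particular $g\in L^1_q(\Omega)$, so the hypothesis that $\Omega$ is an $(r,q)$-Sobolev--Poincar\'e domain applies to $g$.

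Next I would transform the left-hand side. For the minimizing constant $c$ (the same constant works on both sides since constants pull back to constants), the Luzin $N$-property of $\varphi^{-1}$ together with the change-of-variables formula for quasiconformal mappings gives
\[
\int\limits_{\Omega'}|f(y)-c|^r h(y)\,dy=\int\limits_{\Omega'}|f(y)-c|^r|J(y,\varphi^{-1})|\,dy=\int\limits_{\Omega}|g(x)-c|^r\,dx,
\]
because $h(y)=|J(y,\varphi^{-1})|$ is exactly the Jacobian weight that absorbs the volume distortion. Taking $r$-th roots and applying the $(r,q)$-Sobolev--Poincar\'e inequality on $\Omega$ to $g-c$ yields
\[
\left(\int\limits_{\Omega'}|f(y)-c|^r h(y)\,dy\right)^{1/r}\leq B_{r,q}(\Omega)\,\|\nabla g\mid L_q(\Omega)\|=B_{r,q}(\Omega)\,\|\varphi^{*}f\mid L^1_q(\Omega)\|.
\]

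Finally I would invoke the operator bound from Theorem~\ref{NormCO}: $\|\varphi^{*}f\mid L^1_q(\Omega)\|\leq K^{1/n}|\Omega'|^{(p-n)/(np)}|\Omega|^{(n-q)/(nq)}\,\|\nabla f\mid L_p(\Omega')\|$. Combining the last two displays gives, for each admissible $q\in[1,n]$ (with $r$ ranging over $1\leq r\leq nq/(n-q)$),
\[
\inf\limits_{c\in\mathbb R}\left(\int\limits_{\Omega'}|f(y)-c|^r h(y)\,dy\right)^{1/r}\leq B_{r,q}(\Omega)\,|\Omega|^{\frac{n-q}{nq}}\,K^{1/n}|\Omega'|^{\frac{p-n}{np}}\left(\int\limits_{\Omega'}|\nabla f(y)|^p\,dy\right)^{1/p},
\]
and taking the infimum over $q\in[1,n]$ produces the stated constant. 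The one point that needs care — the main (modest) obstacle — is the justification of the change-of-variables identity above: one must check that $f\circ\varphi$ is weakly differentiable with $\nabla(f\circ\varphi)$ given by the chain rule a.e. (this is where the bounded composition operator $\varphi^{*}:L^1_p(\Omega')\to L^1_q(\Omega)$, established in Theorem~\ref{NormCO} via \cite{U93,VU02}, is used rather than proven afresh), and that both $\varphi$ and $\varphi^{-1}$ satisfy the Luzin $N$-property so that sets of measure zero are negligible on both sides; since $K$-quasiconformal mappings have the $N$-property in both directions, this is automatic.
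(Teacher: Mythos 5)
Your proposal is correct and follows essentially the same route as the paper: pull $f$ back to $\Omega$ via $\varphi$, use the quasiconformal change-of-variables formula to identify the weighted $L_r$-norm on $\Omega'$ with the unweighted one on $\Omega$, apply the $(r,q)$-Sobolev--Poincar\'e inequality there, and then invoke the operator norm bound of Theorem~\ref{NormCO} before taking the infimum over $q\in[1,n]$. The only cosmetic difference is that the paper first argues for smooth $f$ and then approximates in $W^1_p(\Omega')$, whereas you apply the composition operator directly to arbitrary Sobolev functions, which the cited composition-operator results justify.
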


Here $B_{r,q}(\Omega)$ is the best constant in the (unweighted) $(r,q)$-Sobolev-Poincar\'e inequality in the domain $\Omega$.

\begin{proof}
By the conditions of the theorem there exists a $K$-quasiconformal mapping $\varphi:\Omega\to\Omega'$. Denote by $h(y)=|J(y,\varphi^{-1})|$ the quasiconformal weight in $\Omega'$.

Let $f \in L_p^1(\Omega')$ be a smooth function. Then the composition $g=f \circ \varphi^{-1}$ is well defined almost everywhere in $\Omega$ and belongs to the Sobolev space $L_q^1(\Omega)$ \cite{VGR}. Hence, by the Sobolev embedding theorem $g=f \circ \varphi^{-1} \in W_q^1(\Omega)$ \cite{M} and the unweighted Poincar\'e-Sobolev inequality
\begin{equation}\label{IN3.1}
\inf_{c \in \mathbb R}||f \circ \varphi^{-1} -c \,|\, L_{r}(\Omega)|| \leq B_{r,q}(\Omega) ||\nabla (f \circ \varphi^{-1}) \,|\, L_{q}(\Omega)||
\end{equation}
holds for any $1\leq r \leq nq/(n-q)$.

Taking into account the change of variable formula for quasiconformal mappings \cite{VGR}, the Poincar\'e-Sobolev inequality \eqref{IN3.1} and Theorem \ref{NormCO}, we obtain for a smooth function $f \in W_p^1(\Omega')$
\begin{multline*}
\inf\limits_{c \in \mathbb R}\left(\int\limits_{\Omega'} |f(y)-c|^rh(y)dy\right)^{\frac{1}{r}} =
\inf\limits_{c \in \mathbb R}\left(\int\limits_{\Omega'} |f(y)-c|^r |J(y,\varphi^{-1})| dy\right)^{\frac{1}{r}} \\
{} = \inf\limits_{c \in \mathbb R}\left(\int\limits_{\Omega} |g(x)-c|^rdx\right)^{\frac{1}{r}} \leq B_{r,q}(\Omega)
\left(\int\limits_{\Omega} |\nabla g(x)|^q dx\right)^{\frac{1}{q}} \\
{} \leq B_{r,q}(\Omega) K^{\frac{1}{n}} |\Omega|^{\frac{n-q}{nq}} |\Omega'|^{\frac{p-n}{np}}
\left(\int\limits_{\Omega'} |\nabla f(y)|^p dy\right)^{\frac{1}{p}}.
\end{multline*}

Approximating an arbitrary function $f \in W^{1}_{p}(\Omega')$ by smooth functions we have
$$
\inf\limits_{c \in \mathbb R}\left(\int\limits_{\Omega'} |f(y)-c|^rh(y)dy\right)^{\frac{1}{r}} \leq
B_{r,p}(\Omega',h) \left(\int\limits_{\Omega'} |\nabla f(y)|^p dy\right)^{\frac{1}{p}}
$$
with the constant
$$
B_{r,p}(\Omega,h) \leq \inf\limits_{q \in [1, n]} \left\{B_{r,q}(\Omega) |\Omega|^{\frac{n-q}{nq}}\right\} K^{\frac{1}{n}} |\Omega'|^{\frac{p-n}{np}}.
$$
\end{proof}

The property of the $K$-quasiconformal $\beta$-regularity implies the integrability of a
Jacobian of quasiconformal mappings and therefore for any $K$-quasiconformal $\beta$-regular domain we have the embedding of weighted Lebesgue spaces $L_r(\Omega,h)$ into non-weight Lebesgue
spaces $L_s(\Omega)$ for $s={(\beta -1)r}/{\beta}$ \cite{GPU19}:

\begin{lemma} \label{L3.2}
Let $\Omega'$ be a $K$-quasiconformal $\beta$-regular domain about a domain $\Omega$. Then for any function
$f \in L_r(\Omega,h)$, $\beta / (\beta - 1) \leq r < \infty$, the inequality
$$
||f\,|\,L_s(\Omega')|| \leq \left(\int\limits_\Omega \big|J(x,\varphi)\big|^{\beta}~dx \right)^{{\frac{1}{\beta}} \cdot \frac{1}{s}} ||f\,|\,L_r(\Omega',h)||
$$
holds for $s={(\beta -1)r}/{\beta}$.
\end{lemma}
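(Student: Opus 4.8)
The plan is to transport both sides of the claimed inequality to the domain $\Omega$ by the change of variables formula for the quasiconformal mapping $\varphi : \Omega \to \Omega'$, and then to apply an ordinary H\"older inequality there. First I would record the two consequences of the change of variables formula for quasiconformal mappings \cite{VGR}; this is legitimate because $\varphi$ and $\varphi^{-1}$ are mappings of finite distortion possessing the Luzin $N$-property, so null sets cause no trouble when transporting integrals. For every nonnegative measurable function $u$ on $\Omega'$,
\[
\int\limits_{\Omega'} u(y)\,dy = \int\limits_{\Omega} u(\varphi(x))\,|J(x,\varphi)|\,dx .
\]
Applying this with $u=|f|^{r}h$ and using the a.e.\ identity $|J(\varphi(x),\varphi^{-1})| = |J(x,\varphi)|^{-1}$ together with $h(y)=|J(y,\varphi^{-1})|$ gives
\[
\int\limits_{\Omega'} |f(y)|^{r}h(y)\,dy = \int\limits_{\Omega} |f(\varphi(x))|^{r}\,dx ,
\]
so that $\|f\mid L_r(\Omega',h)\| = \|f\circ\varphi\mid L_r(\Omega)\|$.

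Next I would apply the general change of variables formula to $u=|f|^{s}$, split the integrand as $|f(\varphi(x))|^{s}\cdot|J(x,\varphi)|$, and estimate by H\"older's inequality with exponents $r/s$ and $r/(r-s)$. A one-line computation from $s=(\beta-1)r/\beta$ yields $r-s=r/\beta$, hence the conjugate exponent is $r/(r-s)=\beta$ and $(r-s)/r=1/\beta$; therefore
\[
\int\limits_{\Omega'} |f(y)|^{s}\,dy \le \left(\int\limits_{\Omega} |f(\varphi(x))|^{r}\,dx\right)^{\frac{s}{r}}\left(\int\limits_{\Omega} |J(x,\varphi)|^{\beta}\,dx\right)^{\frac{1}{\beta}} .
\]
Raising to the power $1/s$ and substituting the identity $\|f\mid L_r(\Omega',h)\| = \|f\circ\varphi\mid L_r(\Omega)\|$ from the previous step produces exactly the asserted inequality. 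The hypothesis $\beta/(\beta-1)\le r$ is what makes $s=(\beta-1)r/\beta\ge 1$, so that $L_s(\Omega')$ is a genuine normed space; one also notes $s<r$, which legitimises the H\"older step, and finiteness of the right-hand side (given $f\in L_r(\Omega',h)$ and $\int_\Omega|J(x,\varphi)|^\beta\,dx<\infty$) then forces $f\in L_s(\Omega')$.

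I do not expect a genuine obstacle here: the argument is essentially bookkeeping with the exponents. The only point that requires attention is the justification of the change of variables formula and of the a.e.\ reciprocity $|J(\varphi(x),\varphi^{-1})| = |J(x,\varphi)|^{-1}$; both rest on the Luzin $N$-property of $\varphi$ and $\varphi^{-1}$ in the quasiconformal setting, after which no approximation by smooth functions is needed and the computation runs directly for measurable $f$.
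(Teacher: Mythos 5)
Your proof is correct: the change-of-variables identities for the quasiconformal mapping $\varphi$ and the H\"older step with exponents $r/s=\beta/(\beta-1)$ and $r/(r-s)=\beta$ applied to $|f\circ\varphi|^{s}\,|J(x,\varphi)|$ give exactly the stated bound, and your remarks on $s\ge 1$, $s<r$ and the reciprocity $|J(\varphi(x),\varphi^{-1})|=|J(x,\varphi)|^{-1}$ cover the only delicate points. The paper itself does not prove Lemma~\ref{L3.2} but cites \cite{GPU19}, where the argument is this same routine combination of the quasiconformal change of variables and H\"older's inequality, so your approach matches the intended one (note only that ``$f\in L_r(\Omega,h)$'' in the statement is a typo for $L_r(\Omega',h)$, which you interpreted correctly).
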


According to Theorem \ref{Th3.1} and Lemma \ref{L3.2} we obtain an upper estimate of the Poincar\'e constant in quasiconformal regular domains.

\begin{theorem}\label{Th3.3}
Let $\Omega'$ be a $K$-quasiconformal $\beta$-regular domain about a $(r,q)$-Sobolev-Poincar\'e domain $\Omega$. Then for any function $f \in W_p^1(\Omega')$, $p>n$, the Poincar\'e-Sobolev inequality
$$
\inf\limits_{c \in \mathbb R}\left(\int\limits_{\Omega'} |f(y)-c|^s h(y)dy\right)^{\frac{1}{r}} \leq
B_{s,p}(\Omega') \left(\int\limits_{\Omega'} |\nabla f(y)|^p dy\right)^{\frac{1}{p}}
$$
holds for any $1\leq s\leq nq/(n-q)\cdot (\beta -1)/\beta$ with the constant
\begin{multline*}
B_{s,p}(\Omega') \leq \left(\int\limits_\Omega \big|J(x,\varphi)\big|^{\beta}~dx \right)^{{\frac{1}{\beta}} \cdot \frac{1}{s}} B_{r,p}(\Omega,h) \\
\leq \inf\limits_{q \in (q^{\ast}, n]} \left\{B_{r,q}(\Omega) |\Omega|^{\frac{n-q}{nq}}\right\} K^{\frac{1}{n}} |\Omega'|^{\frac{p-n}{np}} \cdot ||J_{\varphi}\,|\,L_{\beta}(\Omega)||^{\frac{1}{s}},
\end{multline*}
where $q^{\ast}=\beta ns/(\beta s +\beta (n-1))$
\end{theorem}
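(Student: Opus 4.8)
The plan is to obtain Theorem~\ref{Th3.3} by composing the two-weight Sobolev--Poincar\'e inequality of Theorem~\ref{Th3.1} with the weighted-to-unweighted embedding $L_r(\Omega',h)\hookrightarrow L_s(\Omega')$ furnished by $\beta$-regularity (Lemma~\ref{L3.2}), and then doing the exponent bookkeeping that pins down the admissible range. First I would check the hypotheses of Theorem~\ref{Th3.1}: since $\Omega$ is a $(r,q)$-Sobolev--Poincar\'e domain it is bounded, and since $\Omega'$ is $K$-quasiconformal $\beta$-regular about $\Omega$ there is a $K$-quasiconformal $\varphi\colon\Omega\to\Omega'$ with $\int_\Omega|J(x,\varphi)|^\beta\,dx<\infty$; by H\"older's inequality and the change-of-variables formula for quasiconformal maps this gives $|\Omega'|=\int_\Omega|J(x,\varphi)|\,dx<\infty$, so Theorem~\ref{Th3.1} is applicable. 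Consequently, for $f\in W^1_p(\Omega')$ with $p>n$, Theorem~\ref{Th3.1} yields
\[
\inf_{c\in\mathbb R}\Big(\int_{\Omega'}|f(y)-c|^r h(y)\,dy\Big)^{1/r}\le B_{r,p}(\Omega',h)\Big(\int_{\Omega'}|\nabla f(y)|^p\,dy\Big)^{1/p}
\]
for every $r$ with $1\le r\le nq/(n-q)$, with $B_{r,p}(\Omega',h)\le\inf_{q\in[1,n]}\{B_{r,q}(\Omega)|\Omega|^{\frac{n-q}{nq}}\}K^{1/n}|\Omega'|^{\frac{p-n}{np}}$.

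Next, fix $\varepsilon>0$ and choose $c_0=c_0(\varepsilon)$ nearly realizing the infimum on the left. Provided $r\ge\beta/(\beta-1)$, Lemma~\ref{L3.2} applies to $g=f-c_0\in L_r(\Omega',h)$ and gives
\[
\Big(\int_{\Omega'}|f(y)-c_0|^s\,dy\Big)^{1/s}\le\Big(\int_\Omega|J(x,\varphi)|^\beta\,dx\Big)^{\frac1\beta\cdot\frac1s}\Big(\int_{\Omega'}|f(y)-c_0|^r h(y)\,dy\Big)^{1/r},\qquad s=\tfrac{(\beta-1)r}{\beta}.
\]
Chaining this with the previous display, taking the infimum over $c$ on the left and letting $\varepsilon\to0$, I obtain the inequality for $f$ with the constant $B_{s,p}(\Omega')\le\big(\int_\Omega|J(x,\varphi)|^\beta\,dx\big)^{1/(\beta s)}B_{r,p}(\Omega',h)$, which is the first of the two asserted bounds (recall $\|J_\varphi\,|\,L_\beta(\Omega)\|^{1/s}=\big(\int_\Omega|J(x,\varphi)|^\beta\,dx\big)^{1/(\beta s)}$). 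Substituting the Theorem~\ref{Th3.1} estimate for $B_{r,p}(\Omega',h)$ then produces the second inequality.

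It remains to identify the range of exponents. Since $s=(\beta-1)r/\beta$ is equivalent to $r=\beta s/(\beta-1)$, the two constraints on $r$ used above — namely $r\le nq/(n-q)$ from Theorem~\ref{Th3.1} and $r\ge\beta/(\beta-1)$ from Lemma~\ref{L3.2} — become, respectively, $s\le\frac{\beta-1}{\beta}\cdot\frac{nq}{n-q}$ and $s\ge1$, which is exactly the stated interval for $s$. For the infimum over $q$, admissibility of the pair $(r,q)$ in Theorem~\ref{Th3.1} additionally requires $q\le n$, while $r\le nq/(n-q)$ with $r=\beta s/(\beta-1)$ translates into the lower threshold $q>q^{\ast}$ displayed in the statement; hence $q$ runs over $(q^{\ast},n]$. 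I expect the only delicate point to be precisely this last step — checking that the interval of $r$ allowed jointly by the two auxiliary results is nonempty and turning it into the threshold $q^{\ast}$ (left open, as it corresponds to the borderline Sobolev exponent $r=nq/(n-q)$) — while the rest is a direct concatenation of Theorem~\ref{Th3.1}, Lemma~\ref{L3.2}, and the change-of-variables formula.
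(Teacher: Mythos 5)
Your proposal is correct and follows essentially the same route as the paper: chain the two-weight inequality of Theorem~\ref{Th3.1} with the weighted-to-unweighted embedding of Lemma~\ref{L3.2} applied to $f-c$, then substitute the bound for $B_{r,p}(\Omega',h)$ and track the exponents $s=(\beta-1)r/\beta$, $q\in(q^{\ast},n]$. Your treatment is in fact slightly more careful than the paper's (verifying $|\Omega'|<\infty$ via $\beta$-integrability of the Jacobian and making explicit the constraint $r\geq\beta/(\beta-1)$ that yields $s\geq 1$), but this is a refinement of the same argument, not a different one.
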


\begin{proof}
Let $f \in W^1_p(\Omega')$, $p>n$. Then by Theorem~\ref{Th3.1} and Lemma~\ref{L3.2} we obtain
\begin{multline*}
\inf\limits_{c \in \mathbb R}\left(\int\limits_{\Omega'} |f(y)-c|^s~dy\right)^{\frac{1}{s}} \\
{} \leq  \left(\int\limits_\Omega \big|J(x,\varphi)\big|^{\beta}~dx\right)^{{\frac{1}{\beta}} \cdot \frac{1}{s}}
\inf\limits_{c \in \mathbb R}\left(\int\limits_{\Omega'} |f(y)-c|^rh(y)~dy\right)^{\frac{1}{r}} \\
{} \leq B_{r,p}(\Omega', h) \left(\int\limits_\Omega \big|J(x,\varphi)\big|^{\beta}~dx\right)^{{\frac{1}{\beta}} \cdot \frac{1}{s}}
\left(\int\limits_{\Omega'} |\nabla f(y)|^p~dy\right)^{\frac{1}{p}}
\end{multline*}
for $1\leq s\leq nq/(n-q)\cdot (\beta -1)/\beta$.

Since by Lemma~\ref{L3.2} $s=\frac{\beta -1}{\beta}r$ and by Theorem~\ref{Th3.1} $r \geq 1$, then $s \geq 1$ and the theorem proved.
\end{proof}

By the generalized version of the Rellich-Kondrachov compactness theorem (see, for example, \cite{M}) and the $(r,p)$--Sobolev-Poincar\'e inequality for $r>p$, it follows that
the embedding operator
$$
i: W^1_p(\Omega) \hookrightarrow L_p(\Omega)
$$
is compact in $K$-quasiconformal $\beta$-regular domains $\Omega \subset \mathbb R^n$, $n \geq 2$.
Note that  sufficient conditions for validity of  the Rellich-Kondrachov theorem  in non-smooth domains have been given by using a general quasihyperbolic boundary condition  in \cite{EHS}.
In particular, for domains $\Omega \subset \mathbb R^n$, $n \geq 2$, satisfying a quasihyperbolic boundary condition it is proved that there exists $p_0=p_0 (\Omega)<n$ such that
the embedding operator 
$$
i: W^1_p(\Omega) \hookrightarrow L_p(\Omega)
$$
is compact for every $p>p_0$.

So, by the Min-Max Principle the first non-trivial Neumann eigenvalue $\mu_p(\Omega)$ can be characterized \cite{ENT} as
\begin{equation*}
\mu_p(\Omega)\\=\min \left\{\frac{\int\limits_\Omega |\nabla u(x)|^{p}~dx}{\int\limits_\Omega |u(x)|^{p}~dx} : u \in W_p^1(\Omega) \setminus \{0\},
\int\limits_\Omega |u|^{p-2}u~dx=0 \right\}.
\end{equation*}


In the case $s=p$ Theorem~\ref{Th3.3} and the Min-Max Principle implies the lower estimates of the first non-trivial eigenvalue of the degenerate $p$-Laplace Neumann
operator, $p>n$, in $K$-quasiconformal $\beta$-regular domains $\Omega' \subset \mathbb R^n$, $n \geq 2$.

\vskip 0.2cm
\noindent
\begin{theorem}
\label{thm:est}
Let $\Omega'$ be a $K$-quasiconformal $\beta$-regular domain about a $(r,q)$-Sobolev-Poincar\'e domain $\Omega$, $r=p\beta/(\beta-1)$, $p>n$.
Then the following inequality holds
$$
\frac{1}{\mu_p(\Omega')}
 \leq  \inf\limits_{q \in (q^{\ast}, n]} \left\{B_{r,q}^p(\Omega) |\Omega|^{\frac{p(n-q)}{nq}}\right\} K^{\frac{p}{n}} |\Omega'|^{\frac{p-n}{n}} \cdot ||J_{\varphi}\,|\,L_{\beta}(\Omega)||,
$$
where $q^{\ast}=\beta np/(\beta p + n(\beta-1))$.
\end{theorem}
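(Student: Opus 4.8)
The plan is to specialize Theorem~\ref{Th3.3} to the borderline exponent $s=p$ and then feed the resulting (unweighted) Poincar\'e--Sobolev inequality into the variational description of $\mu_p(\Omega')$ recalled via the Min--Max Principle. Setting $s=p$ in Theorem~\ref{Th3.3} is compatible with the relation $s=(\beta-1)r/\beta$ inherited from Lemma~\ref{L3.2} precisely when $r=p\beta/(\beta-1)$, which is the standing hypothesis here; and the admissibility requirement $1\le s\le nq/(n-q)\cdot(\beta-1)/\beta$ rewrites, after clearing denominators, as $q\ge \beta np/(\beta p+n(\beta-1))=q^{\ast}$, i.e. exactly the range over which the infimum in the statement runs. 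Thus Theorem~\ref{Th3.3} supplies, for every $f\in W_p^1(\Omega')$,
$$
\inf_{c\in\mathbb R}\left(\int_{\Omega'}|f(y)-c|^p\,dy\right)^{\frac1p}\le B_{p,p}(\Omega')\left(\int_{\Omega'}|\nabla f(y)|^p\,dy\right)^{\frac1p},
$$
with
$$
B_{p,p}(\Omega')\le \inf_{q\in(q^{\ast},n]}\Big\{B_{r,q}(\Omega)\,|\Omega|^{\frac{n-q}{nq}}\Big\}\,K^{\frac1n}\,|\Omega'|^{\frac{p-n}{np}}\,\|J_\varphi\mid L_\beta(\Omega)\|^{\frac1p}.
$$

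Next I would invoke the Min--Max Principle; this is legitimate because the embedding $W_p^1(\Omega')\hookrightarrow L_p(\Omega')$ is compact on $K$-quasiconformal $\beta$-regular domains. The elementary observation is that whenever a test function $u$ satisfies the Neumann orthogonality constraint $\int_{\Omega'}|u|^{p-2}u\,dx=0$, the value $c=0$ is a critical point of the convex functional $c\mapsto\int_{\Omega'}|u-c|^p\,dx$, hence its minimizer, so that $\inf_{c\in\mathbb R}\int_{\Omega'}|u-c|^p\,dx=\int_{\Omega'}|u|^p\,dx$. Applying the displayed inequality to such $u$ gives $\|u\mid L_p(\Omega')\|\le B_{p,p}(\Omega')\,\|\nabla u\mid L_p(\Omega')\|$, so the Rayleigh quotient of $u$ is bounded below by $B_{p,p}(\Omega')^{-p}$; taking the infimum over admissible $u$ yields $\mu_p(\Omega')\ge B_{p,p}(\Omega')^{-p}$, equivalently $1/\mu_p(\Omega')\le B_{p,p}(\Omega')^p$. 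Raising the bound on $B_{p,p}(\Omega')$ to the $p$-th power — using that $t\mapsto t^p$ is increasing on $[0,\infty)$, so the $p$-th power commutes with the infimum and turns $|\Omega'|^{\frac{p-n}{np}}$ into $|\Omega'|^{\frac{p-n}{n}}$, $|\Omega|^{\frac{n-q}{nq}}$ into $|\Omega|^{\frac{p(n-q)}{nq}}$, and $\|J_\varphi\mid L_\beta(\Omega)\|^{\frac1p}$ into $\|J_\varphi\mid L_\beta(\Omega)\|$ — produces precisely
$$
\frac1{\mu_p(\Omega')}\le \inf_{q\in(q^{\ast},n]}\Big\{B_{r,q}^p(\Omega)\,|\Omega|^{\frac{p(n-q)}{nq}}\Big\}\,K^{\frac pn}\,|\Omega'|^{\frac{p-n}{n}}\,\|J_\varphi\mid L_\beta(\Omega)\|.
$$

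I do not expect a genuine obstacle once Theorems~\ref{Th3.1}--\ref{Th3.3} and the compactness of $W_p^1(\Omega')\hookrightarrow L_p(\Omega')$ are available; the proof is essentially a corollary. The only points that need care are the algebra showing that the exponent constraint of Theorem~\ref{Th3.3} collapses, for $s=p$, to the range $q>q^{\ast}$ with $q^{\ast}=\beta np/(\beta p+n(\beta-1))$, and the remark that the Neumann orthogonality condition makes the constant $c=0$ optimal, so that no loss is incurred in passing from $\inf_{c}\|u-c\mid L_p(\Omega')\|$ to $\|u\mid L_p(\Omega')\|$ before invoking the Min--Max Principle.
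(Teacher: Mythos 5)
Your argument is correct and is exactly the route the paper takes: the paper derives Theorem~\ref{thm:est} by setting $s=p$ in Theorem~\ref{Th3.3} (which forces $r=p\beta/(\beta-1)$ and the range $q>q^{\ast}=\beta np/(\beta p+n(\beta-1))$) and then invoking the Min--Max characterization of $\mu_p(\Omega')$, raising the Poincar\'e constant to the $p$-th power. Your added details --- the convexity argument showing that the Neumann orthogonality condition $\int_{\Omega'}|u|^{p-2}u\,dx=0$ makes $c=0$ the optimal constant, and the compactness of $W^1_p(\Omega')\hookrightarrow L_p(\Omega')$ justifying the Min--Max Principle --- are precisely the steps the paper leaves implicit, and they are carried out correctly.
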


In the case of $K$-quasiconformal $\infty$-regular domains, in the similar way we obtain the following assertion:

\vskip 0.2cm
\noindent
\begin{theorem}
\label{thm:est2}
Let $\Omega'$ be a $K$-quasiconformal $\infty$-regular domain about a $(p,q)$-Sobolev-Poincar\'e domain $\Omega$.
Then for any $p>n$ the following inequality holds
$$
\frac{1}{\mu_p(\Omega')}
 \leq  \inf\limits_{q \in (q^{\ast}, n]} \left\{B_{p,q}^p(\Omega) |\Omega|^{\frac{p(n-q)}{nq}}\right\} K^{\frac{p}{n}} |\Omega'|^{\frac{p-n}{n}} \cdot ||J_{\varphi}\,|\,L_{\infty}(\Omega)||,
$$
where $q^{\ast}=np/(p + n)$.
\end{theorem}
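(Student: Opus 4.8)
The plan is to run the proof of Theorem~\ref{thm:est} in the limiting regime $\beta=\infty$: formally, letting $\beta\to\infty$ sends $r=p\beta/(\beta-1)\to p$ and $q^{\ast}=\beta np/(\beta p+n(\beta-1))\to np/(n+p)$, which already matches the claimed statement, and what remains is to replace the two $\beta$-dependent ingredients of that proof --- Lemma~\ref{L3.2} and the quantity $\|J_{\varphi}\,|\,L_{\beta}(\Omega)\|$ --- by their $L_{\infty}$ counterparts. First I would record the $\infty$-analogue of Lemma~\ref{L3.2}: if $\Omega'$ is a $K$-quasiconformal $\infty$-regular domain about $\Omega$ via $\varphi:\Omega\to\Omega'$ and $h(y)=|J(y,\varphi^{-1})|$, then for every $f\in L_r(\Omega',h)$ and every $1\leq r<\infty$,
\[
\|f\,|\,L_r(\Omega')\|\leq \|J_{\varphi}\,|\,L_{\infty}(\Omega)\|^{\frac{1}{r}}\,\|f\,|\,L_r(\Omega',h)\|.
\]
This is immediate from the change-of-variables formula for quasiconformal mappings (legitimate thanks to the Luzin $N$-property): with $y=\varphi(x)$,
\begin{multline*}
\int\limits_{\Omega'}|f(y)|^r\,dy=\int\limits_{\Omega}|f(\varphi(x))|^r\,|J(x,\varphi)|\,dx\\
\leq \|J_{\varphi}\,|\,L_{\infty}(\Omega)\|\int\limits_{\Omega}|f(\varphi(x))|^r\,dx=\|J_{\varphi}\,|\,L_{\infty}(\Omega)\|\int\limits_{\Omega'}|f(y)|^r h(y)\,dy,
\end{multline*}
the last equality being again the change of variables together with $h(\varphi(x))\,|J(x,\varphi)|=1$ for a.e.\ $x$.

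Next I would take $r=p$ in Theorem~\ref{Th3.1}, which is applicable since $\Omega$ is bounded and $|\Omega'|\leq\|J_{\varphi}\,|\,L_{\infty}(\Omega)\|\,|\Omega|<\infty$. As $\Omega$ is a $(p,q)$-Sobolev-Poincar\'e domain, the two-weight inequality of that theorem holds provided $1\leq p\leq nq/(n-q)$, i.e. $q\geq np/(n+p)=q^{\ast}$; together with the constraint $q\in[1,n]$ inherited from Theorem~\ref{NormCO} this gives the admissible range $q\in(q^{\ast},n]$. Chaining the weighted Sobolev-Poincar\'e inequality of Theorem~\ref{Th3.1} (with $r=p$) with the embedding of the first step (also with $r=p$) yields, for every $f\in W^1_p(\Omega')$,
\begin{multline*}
\inf\limits_{c\in\mathbb R}\Bigl(\int\limits_{\Omega'}|f(y)-c|^p\,dy\Bigr)^{\frac{1}{p}}\\
\leq B_{p,q}(\Omega)\,|\Omega|^{\frac{n-q}{nq}}\,K^{\frac{1}{n}}\,|\Omega'|^{\frac{p-n}{np}}\,\|J_{\varphi}\,|\,L_{\infty}(\Omega)\|^{\frac{1}{p}}\Bigl(\int\limits_{\Omega'}|\nabla f(y)|^p\,dy\Bigr)^{\frac{1}{p}},
\end{multline*}
so that $\Omega'$ is itself a $(p,p)$-Sobolev-Poincar\'e domain with an explicit upper bound on the constant $B_{p,p}(\Omega')$.

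It remains to invoke the Min-Max Principle. If $u\in W^1_p(\Omega')\setminus\{0\}$ satisfies $\int_{\Omega'}|u|^{p-2}u\,dx=0$, the convex function $c\mapsto\int_{\Omega'}|u-c|^p\,dy$ has derivative $-p\int_{\Omega'}|u-c|^{p-2}(u-c)\,dy$, which vanishes at $c=0$; hence its minimum is attained at $c=0$, so $\|u\,|\,L_p(\Omega')\|=\inf_{c\in\mathbb R}\|u-c\,|\,L_p(\Omega')\|\leq B_{p,p}(\Omega')\,\|\nabla u\,|\,L_p(\Omega')\|$, and the Rayleigh quotient of $u$ is bounded below by $B_{p,p}(\Omega')^{-p}$; taking the minimum over admissible $u$ gives $\mu_p(\Omega')\geq B_{p,p}(\Omega')^{-p}$. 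Raising the bound on $B_{p,p}(\Omega')$ to the $p$-th power and minimizing over $q\in(q^{\ast},n]$ produces precisely
\[
\frac{1}{\mu_p(\Omega')}\leq \inf\limits_{q\in(q^{\ast},n]}\bigl\{B_{p,q}^p(\Omega)\,|\Omega|^{\frac{p(n-q)}{nq}}\bigr\}\,K^{\frac{p}{n}}\,|\Omega'|^{\frac{p-n}{n}}\,\|J_{\varphi}\,|\,L_{\infty}(\Omega)\|.
\]
No genuinely new difficulty arises relative to Theorem~\ref{thm:est}; the only points demanding care are the $L_{\infty}$ form of Lemma~\ref{L3.2} and the bookkeeping of exponents --- in particular checking that the admissible range of $q$ collapses exactly to $(np/(n+p),\,n]$ when $\beta=\infty$, and that the factor $\|J_{\varphi}\,|\,L_{\beta}(\Omega)\|^{1/s}$ with $s=p$ degenerates to $\|J_{\varphi}\,|\,L_{\infty}(\Omega)\|^{1/p}$, contributing the single power $\|J_{\varphi}\,|\,L_{\infty}(\Omega)\|$ after the $p$-th power is taken.
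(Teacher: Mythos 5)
Your proposal is correct and follows essentially the same route as the paper, which derives Theorem~\ref{thm:est2} "in the similar way" as Theorem~\ref{thm:est}: the weighted Sobolev--Poincar\'e inequality of Theorem~\ref{Th3.1} with $r=p$, the $L_{\infty}$ counterpart of Lemma~\ref{L3.2} obtained by the quasiconformal change of variables, and the Min-Max Principle. Your explicit verification of the admissible range $q\in(np/(n+p),\,n]$ and of the attainment of $\inf_{c}\|u-c\,|\,L_p(\Omega')\|$ at $c=0$ under the constraint $\int_{\Omega'}|u|^{p-2}u\,dx=0$ fills in exactly the details the paper leaves implicit.
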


Note that any convex domain $\Omega \subset \mathbb R^n$ is  the Sobolev-Poincar\'e domain and the constant
$B_{r,q}(\Omega)$ can be estimated as \cite{GU16}
\[
B_{r,q}(\Omega)\leq \frac{d^n_{\Omega}}{n|\Omega|}\left(\frac{1-\frac{1}{q}+\frac{1}{r}}{\frac{1}{n}-\frac{1}{q}+\frac{1}{r}}\right)^{1-\frac{1}{q}+\frac{1}{r}} \omega_n^{1-\frac{1}{n}}
|\Omega|^{\frac{1}{n}-\frac{1}{q}+\frac{1}{r}},
\]
where $\omega_n=\frac{2 \pi^{n/2}}{n \Gamma (n/2)}$ is the volume of the unit ball in $\mathbb R^n$ and $d_{\Omega}$ is the diameter of $\Omega$.

\vskip 0.2cm

As examples, we consider non-convex star-shaped domains which are $K$-quasiconformal $\infty$-regular domains.

\begin{example}
The homeomorphism
\[
\varphi(x)=|x|^{a}x, \quad a>0,
\]
is $(a+1)$-quasiconformal and maps the $n$-dimensional cube
\[
Q:=\{x_k \in \mathbb R^n : |x_k|<\sqrt{2}/2\}
\]
onto non-convex star-shaped domains $\Omega_a$.

Now we estimate the following quantities. A straightforward calculation yields
\[
||J_{\varphi}\,|\,L_{\infty}(Q)||= \esssup\limits_{x \in Q} \left[(a+1)|x|^{na}\right] \leq a+1
\]
and
\[
d_Q=2, \quad |Q|=2^{n/2}, \quad |\Omega_a| \leq \omega_n.
\]

Then by Theorem~\ref{thm:est2} we have
\begin{multline*}
\frac{1}{\mu_p(\Omega_a)}
 \leq  \inf\limits_{q \in (q^{\ast}, n]} \left\{B_{p,q}^p(Q) |Q|^{\frac{p(n-q)}{nq}}\right\} K^{\frac{p}{n}} |\Omega_a|^{\frac{p-n}{n}} \cdot ||J_{\varphi}\,|\,L_{\infty}(Q)|| \\
 \leq \inf\limits_{q \in (q^{\ast}, n]}
 \left(\frac{1-\frac{1}{q}+\frac{1}{p}}{\frac{1}{n}-\frac{1}{q}+\frac{1}{p}}\right)^{p+1-\frac{p}{q}}
 \frac{2^{\frac{n(p+1)}{2}}}{n^p} (a+1)^{\frac{p}{n}} \omega_n^{p+1-\frac{p}{n}},
\end{multline*}
where $q^{\ast}=np/(p + n)$.
\end{example}

In the case of quasiconformal mappings $\varphi:\mathbb B\to\Omega$ Theorem~\ref{thm:est} can be reformulated as

\begin{theorem}
\label{thm:estball}
Let $\Omega$ be a $K$-quasiconformal $\beta$-regular domain about the unit ball $\mathbb B$, $r=p\beta/(\beta-1)$, $p>n$.
Then the following inequality holds
\begin{equation*}
\frac{1}{\mu_p(\Omega)}
\leq \\
\inf\limits_{q \in (q^{\ast}, n]} \left\{\frac{2^{np}}{n^p}\left(\frac{1-\frac{1}{q}+\frac{1}{r}}{\frac{1}{n}-\frac{1}{q}+\frac{1}{r}}\right)^{p-\frac{p}{q}+\frac{p}{r}} \omega_n^{\frac{p}{r}-\frac{p}{n}}\right\} K^{\frac{p}{n}} |\Omega|^{\frac{p-n}{n}} \cdot ||J_{\varphi}\,|\,L_{\beta}(\mathbb B)||,
\end{equation*}
where $q^{\ast}=\beta np/(\beta p + n(\beta-1))$.
\end{theorem}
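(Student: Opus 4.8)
The plan is to obtain Theorem~\ref{thm:estball} as the special case of Theorem~\ref{thm:est} in which the auxiliary $(r,q)$-Sobolev-Poincar\'e domain is taken to be the unit ball $\mathbb B$, and then to make the Sobolev-Poincar\'e constant explicit via the convex-domain bound for $B_{r,q}$ recalled just above the statement. Since $\mathbb B$ is convex it is a $(r,q)$-Sobolev-Poincar\'e domain for every admissible pair $(r,q)$, so Theorem~\ref{thm:est} applies verbatim with $\mathbb B$ in the role of the Sobolev-Poincar\'e domain and the quasi-ball $\Omega=\varphi(\mathbb B)$ in the role of the target domain. This already yields
$$
\frac{1}{\mu_p(\Omega)}\leq \inf_{q\in(q^{\ast},n]}\left\{B_{r,q}^p(\mathbb B)\,|\mathbb B|^{\frac{p(n-q)}{nq}}\right\}K^{\frac{p}{n}}|\Omega|^{\frac{p-n}{n}}\cdot\|J_{\varphi}\,|\,L_{\beta}(\mathbb B)\|,
$$
with $r=p\beta/(\beta-1)$ and $q^{\ast}=\beta np/(\beta p+n(\beta-1))$, exactly as in Theorem~\ref{thm:est}.

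Next I would substitute the convex estimate for $B_{r,q}$ with $\Omega=\mathbb B$, using $d_{\mathbb B}=2$ and $|\mathbb B|=\omega_n$. The three powers of $\omega_n$ appearing there, namely $\omega_n^{-1}$ coming from $1/(n|\mathbb B|)$, together with $\omega_n^{1-\frac1n}$ and $\omega_n^{\frac1n-\frac1q+\frac1r}$, combine to $\omega_n^{\frac1r-\frac1q}$, so that
$$
B_{r,q}(\mathbb B)\leq \frac{2^n}{n}\left(\frac{1-\frac1q+\frac1r}{\frac1n-\frac1q+\frac1r}\right)^{1-\frac1q+\frac1r}\omega_n^{\frac1r-\frac1q}.
$$
Raising this to the $p$-th power and multiplying by $|\mathbb B|^{\frac{p(n-q)}{nq}}=\omega_n^{p(\frac1q-\frac1n)}$, the two surviving exponents of $\omega_n$ add to $p\left(\frac1r-\frac1n\right)$, giving
$$
B_{r,q}^p(\mathbb B)\,|\mathbb B|^{\frac{p(n-q)}{nq}}\leq \frac{2^{np}}{n^p}\left(\frac{1-\frac1q+\frac1r}{\frac1n-\frac1q+\frac1r}\right)^{p-\frac pq+\frac pr}\omega_n^{\frac pr-\frac pn}.
$$
Inserting this into the previous display and taking the infimum over $q\in(q^{\ast},n]$ produces precisely the asserted inequality.

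The argument is therefore a direct specialization of Theorem~\ref{thm:est} followed by elementary bookkeeping. The only point demanding care is tracking the exponents of $\omega_n$ (and the constants $2$ and $n$) through the two cancellations above, and checking that the range $q\in(q^{\ast},n]$ together with the relation $r=p\beta/(\beta-1)$ is inherited consistently from Theorem~\ref{thm:est} (which in turn rests on Theorem~\ref{Th3.3} and Lemma~\ref{L3.2}). There is no genuine analytic obstacle beyond what has already been established, so I do not expect any step to be hard.
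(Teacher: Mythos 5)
Your proposal is correct and is exactly the paper's intended derivation: Theorem~\ref{thm:estball} is stated as the specialization of Theorem~\ref{thm:est} to the unit ball, with the convex-domain bound for $B_{r,q}$ (using $d_{\mathbb B}=2$, $|\mathbb B|=\omega_n$) inserted, and your bookkeeping of the $\omega_n$ exponents, yielding $\omega_n^{\frac{p}{r}-\frac{p}{n}}$, as well as the consistency of the range $q\in(q^{\ast},n]$ with $r=p\beta/(\beta-1)$, is accurate.
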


\section{The weak reverse H\"older inequality}

In this section we obtain estimates of constants in the reverse H\"older inequality. In this section we suppose that $n\geq 3$. The case $n=2$ was considered in \cite{GPU18_2,GPU18}. We start from the following version of the weak reverse  H\"older inequality \cite{BI}.

\begin{proposition}\label{BojarskiIwaniecLemma}
\cite{BI}
Suppose that
$\varphi :\Omega\to \mathbb R^n$ is a $K$-quasiconformal mapping and $0<\sigma <1$ is given. Then, for any ball $B$ in $\Omega$ there exists a constant $C(n)$, depending only on $n$  such that
\begin{equation}\label{BIEquation}
\biggl(\frac{1}{\vert \sigma B\vert }
\int\limits_{\sigma B}\vert D\varphi (x)\vert^n\,dx\biggr)^{\frac{1}{n}}\le \frac{KC(n)}{\sigma (1-\sigma )}
\biggl(\frac{1}{\vert B\vert}
\int\limits_{B}\vert D\varphi (x)\,dx\vert ^{\frac{n}{2}}\biggr)^{\frac{2}{n}}\,.
\end{equation}
\end{proposition}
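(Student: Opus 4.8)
The plan mirrors the classical Bojarski--Iwaniec argument: one derives a Caccioppoli inequality for $\varphi$, feeds into it the borderline Sobolev--Poincar\'e inequality with the Sobolev-conjugate exponents $n/2$ and $n$, and finally normalises by the volumes of $B$ and $\sigma B$. The whole delicacy lies in keeping the dependence on $K$ linear and the dependence on $\sigma$ of the form $(\sigma(1-\sigma))^{-1}$.

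\emph{Caccioppoli step.} Fix a ball $B=B(x_0,R)$ as in the statement, assume (without loss of generality) that $\varphi$ is sense-preserving so that $J(\cdot,\varphi)\ge0$ a.e., let $b=|B|^{-1}\int_B\varphi\,dx$, and choose $\eta\in C_0^{\infty}(B)$ with $0\le\eta\le1$, $\eta\equiv1$ on $\sigma B$ and $|\nabla\eta|\le c\,(1-\sigma)^{-1}R^{-1}$. The structural fact behind the estimate is that the Jacobian is a null Lagrangian: since $\varphi\in W^1_{n,\loc}(\Omega)$ the cofactor matrix $\operatorname{cof}D\varphi$ lies in $L^{n/(n-1)}_{\loc}$ and satisfies the Piola identity $\sum_i\partial_i(\operatorname{cof}D\varphi)_{1i}=0$ in the distributional sense, while $J(x,\varphi)=\sum_i\partial_i\varphi^1(x)\,(\operatorname{cof}D\varphi)_{1i}(x)$. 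Integrating by parts against $\eta^n$, and using that $\int_B\sum_i\partial_i(\eta^n)\,(\operatorname{cof}D\varphi)_{1i}\,dx=0$ to replace $\varphi^1$ by $\varphi^1-b^1$, gives
\[
\int_B\eta^n\,J(x,\varphi)\,dx=-\int_B\bigl(\varphi^1(x)-b^1\bigr)\sum_i\partial_i(\eta^n)\,(\operatorname{cof}D\varphi)_{1i}(x)\,dx .
\]
Now $|\operatorname{cof}D\varphi|\le c(n)\,|D\varphi|^{n-1}$ and $|\partial_i(\eta^n)|\le n\,\eta^{n-1}|\nabla\eta|$; applying H\"older's inequality with exponents $n$ and $n/(n-1)$ on the right, the quasiconformality inequality $|D\varphi(x)|^n\le K\,J(x,\varphi)$ on the left (so $\int_B\eta^n|D\varphi|^n\le K\int_B\eta^nJ(\cdot,\varphi)$), and then cancelling the common factor $\bigl(\int_B\eta^n|D\varphi|^n\bigr)^{(n-1)/n}$ (the case $\int_B\eta^n|D\varphi|^n=0$ being trivial), one arrives at
\[
\left(\int_B\eta^n|D\varphi|^n\,dx\right)^{\frac1n}\le K\,C(n)\left(\int_B|\varphi-b|^n|\nabla\eta|^n\,dx\right)^{\frac1n}.
\]
This is the step that pins the exponent of $K$ to $1$.

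\emph{Sobolev--Poincar\'e and conclusion.} Since $n\ge3$ we have $1<n/2<n$, hence $\varphi\in W^1_{n,\loc}\subset W^1_{n/2,\loc}$, and because $n$ is the Sobolev conjugate of $n/2$ the Sobolev--Poincar\'e inequality holds in the scale-invariant form
\[
\left(\int_B|\varphi-b|^n\,dx\right)^{\frac1n}\le C(n)\left(\int_B|D\varphi|^{\frac n2}\,dx\right)^{\frac2n},
\]
with no factor of $R$. Inserting this into the Caccioppoli inequality, bounding $|\nabla\eta|\le c\,(1-\sigma)^{-1}R^{-1}$ and using $\eta\equiv1$ on $\sigma B$, yields
\[
\int_{\sigma B}|D\varphi|^n\,dx\le\frac{\bigl(K\,C(n)\bigr)^n}{(1-\sigma)^nR^n}\left(\int_B|D\varphi|^{\frac n2}\,dx\right)^2 .
\]
Dividing by $|\sigma B|=\omega_n\sigma^nR^n$, rewriting $\bigl(\int_B|D\varphi|^{n/2}\bigr)^2=(\omega_nR^n)^2\bigl(|B|^{-1}\int_B|D\varphi|^{n/2}\bigr)^2$, and taking $n$-th roots, all powers of $R$ cancel and one obtains exactly
\[
\left(\frac1{|\sigma B|}\int_{\sigma B}|D\varphi|^n\,dx\right)^{\frac1n}\le\frac{K\,C(n)}{\sigma(1-\sigma)}\left(\frac1{|B|}\int_B|D\varphi|^{\frac n2}\,dx\right)^{\frac2n},
\]
the final dimensional constant $C(n)$ now also absorbing $\omega_n^{1/n}$ and the Sobolev--Poincar\'e constant.

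\emph{Main obstacle.} The only place needing genuine care is the Caccioppoli step: justifying the integration by parts against the cofactor matrix at the sharp integrability class $W^1_n$ — so that all the products appearing are honestly in $L^1$ and the Piola identity is available — and bookkeeping the exponents so that the emerging constant is linear in $K$ as asserted. Everything after that is pure scaling combined with H\"older's and the Sobolev--Poincar\'e inequalities; the shift by $b$ is used only to make Poincar\'e applicable, and any admissible cutoff $\eta$ produces the stated $(\sigma(1-\sigma))^{-1}$ behaviour.
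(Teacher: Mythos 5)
Your argument is correct and is essentially the proof given in the cited source \cite{BI} (the paper itself states this proposition without proof, simply quoting it from \cite{BI}): the Caccioppoli estimate obtained from the null-Lagrangian structure of the Jacobian via the Piola identity for the cofactor matrix, combined with the scale-invariant Sobolev--Poincar\'e inequality for the exponent pair $(n/2,n)$ and the final volume-normalization, is exactly how the linear dependence on $K$ and the $(\sigma(1-\sigma))^{-1}$ factor arise. The only thing your sketch does not produce is the explicit numerical value of $C(n)$ recorded in \eqref{ConstantLemmaBI}, but the proposition as stated only asserts the existence of a dimensional constant, so this is not a gap.
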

In inequality \eqref{BIEquation}
\begin{equation}\label{ConstantLemmaBI}
C(n)=2^{2n+{3}/{2}+{1}/{n}}
\bigl(
{n}/{(n-2)}\bigr)^{{1}/{n}}5 \omega_{n}\,,\quad  n>2\,.
\end{equation}

We need
the following special case of
\cite{IN} when
balls are used instead of cubes. This result comes  from an iteration process where  the
exponent  $n/2$ on the right hand side  of \eqref{BIEquation} can be reduced to $1$.

\begin{proposition}\label{IwaniecNolderBalls}
Let $\Omega$  be a bounded domain in $\mathbb R ^n$, $n\geq2$,
and let $f\in L^n_{\loc}(\Omega)$.
Suppose that there exists a constant $C_0$ such that
for each ball $B$ with $2B\subset\Omega$
\begin{equation*}
\biggl(\frac{1}{\vert B\vert }
\int\limits_{B}\vert f(x)\vert^n\,dx\biggr)^{\frac{1}{n}}\le C_0
\biggl(\frac{1}{\vert 2B\vert}
\int\limits_{ 2B}\vert f(x)\vert ^{\frac{n}{2}}\,dx\biggr)^{\frac{2}{n}}\,.
\end{equation*}
Then for each ball $B$ with $2B\subset \Omega$
\begin{equation}\label{reduced}
\biggl(\frac{1}{\vert B\vert }
\int\limits_{B}\vert f(x)\vert^n\,dx\biggr)^{\frac{1}{n}}\le C_1
\frac{1}{\vert 2B\vert}
\int\limits_{2B}\vert f(x)\vert \,dx\,.
\end{equation}
\end{proposition}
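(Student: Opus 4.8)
The plan is to prove the apparently stronger family of inequalities: for every exponent $q\in[1,n/2)$ there is a constant $C_1=C_1(n,C_0,q)$ such that
\[
\biggl(\frac{1}{|B|}\int_B|f(x)|^n\,dx\biggr)^{1/n}\le C_1\biggl(\frac{1}{|2B|}\int_{2B}|f(x)|^q\,dx\biggr)^{1/q}
\]
for every ball $B$ with $2B\subset\Omega$. Since $n\ge 3$ we have $1<n/2$, so $q=1$ is admissible and yields \eqref{reduced} (for $n=2$ the assertion is already the hypothesis). To ease notation I would write $\Phi_p(D):=\bigl(|D|^{-1}\int_D|f|^p\,dx\bigr)^{1/p}$, so that the hypothesis reads $\Phi_n(D)\le C_0\,\Phi_{n/2}(2D)$ and the goal is $\Phi_n(B)\le C_1\,\Phi_q(2B)$. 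I may also assume $\int_{2B}|f|\,dx<\infty$, since otherwise the right-hand side is infinite and there is nothing to prove; then every integral occurring below is finite, because $\overline{B(x_0,\tfrac74 r_0)}$ is a compact subset of $\Omega$ whenever $B=B(x_0,r_0)$ and $2B\subset\Omega$.

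The core is a self-improvement (Gehring--Iwaniec type) iteration. First I would fix $q$, set $\alpha:=q/(n-q)\in(0,1)$, and combine the hypothesis with the elementary interpolation inequality for $L^p$-means on a ball $E$, namely $\Phi_{n/2}(E)\le\Phi_q(E)^{\alpha}\,\Phi_n(E)^{1-\alpha}$, to obtain
\[
\Phi_n(D)\le C_0\,\Phi_q(2D)^{\alpha}\,\Phi_n(2D)^{1-\alpha}\qquad\text{for every ball }D\text{ with }2D\subset\Omega.
\]
Next, fixing $B=B(x_0,r_0)$ and writing $G(t):=\int_{B(x_0,t)}|f|^n\,dx$ and $P:=\Phi_q(2B)$, for each pair $r_0\le t<s\le\tfrac32 r_0$ I would cover $B(x_0,t)$ by balls $B(x_i,\delta)$ with centers $x_i\in B(x_0,t)$ and radius $\delta=(s-t)/4$, with overlap bounded by a dimensional constant, so that the doubles $B(x_i,2\delta)$ lie in $B(x_0,s)\subset 2B\subset\Omega$. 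Applying the displayed estimate to each $B(x_i,\delta)$, summing $\int_{B(x_i,\delta)}|f|^n=|B(x_i,\delta)|\,\Phi_n(B(x_i,\delta))^n$ over $i$, and enlarging the $|f|^n$-integrals to $B(x_0,s)$ and the $|f|^q$-integrals to $2B$, one collects the powers of $\delta$ and $r_0$ into an inequality of the form
\[
G(t)\le \frac{\mathcal C(n,C_0,q)\,r_0^{a}\,P^{b}}{(s-t)^{c}}\,G(s)^{1-\alpha},\qquad r_0\le t<s\le\tfrac32 r_0,
\]
with positive exponents $a,b,c$ whose values are forced by the scale invariance of the target inequality (a convenient internal check on the computation).

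Since $1-\alpha<1$, Young's inequality with the conjugate pair $\bigl(\tfrac1{1-\alpha},\tfrac1\alpha\bigr)$ turns this into
\[
G(t)\le \tfrac12\,G(s)+\frac{\mathcal C'(n,C_0,q)\,r_0^{a/\alpha}\,P^{\,b/\alpha}}{(s-t)^{c/\alpha}},\qquad r_0\le t<s\le\tfrac32 r_0,
\]
and then the classical iteration lemma (if a nonnegative bounded function $G$ on an interval satisfies $G(t)\le\vartheta\,G(s)+A(s-t)^{-\gamma}$ for all $t<s$ with $\vartheta<1$, then $G$ at the left endpoint is at most $\tilde c(\gamma,\vartheta)\,A$ times a power of the interval length; see e.g. \cite{M}) yields a bound for $G(r_0)=\int_B|f|^n\,dx$. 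Dividing by $|B|$ and taking $n$-th roots, the powers of $r_0$ cancel and $\Phi_n(B)\le C_1\,\Phi_q(2B)$ follows; specializing to $q=1$ gives \eqref{reduced}. The step I expect to be the real obstacle is this absorption: a single use of the upgraded hypothesis only dominates $\Phi_n$ of a ball by a fractional power $1-\alpha<1$ of $\Phi_n$ of a strictly larger ball, which cannot be reabsorbed on one fixed ball; the remedy is precisely to let the radius vary over an interval, establish the self-referential inequality for $G$ valid on all pairs $t<s$, and feed it to the iteration lemma. The rest is bookkeeping — verifying the inclusions $B(x_i,2\delta)\subset\Omega$ in the covering and checking that the exponents come out dimensionally consistent.
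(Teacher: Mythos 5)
Your argument is correct: interpolating the $L^{n/2}$-mean between the $L^{q}$- and $L^{n}$-means, covering $B(x_0,t)$ by balls of radius $(s-t)/4$ whose doubles stay inside $B(x_0,s)\subset 2B\subset\Omega$, and then absorbing via Young's inequality together with the standard varying-radius iteration lemma on $[r_0,\tfrac32 r_0]$ does yield \eqref{reduced}, with the powers of $r_0$ cancelling exactly as your scale-invariance check predicts, and the case $n=2$ is indeed just the hypothesis. This is essentially the route the paper itself relies on: Proposition~\ref{IwaniecNolderBalls} is quoted from \cite{IN} without proof and described only as coming from ``an iteration process'' reducing the exponent $n/2$ on the right-hand side to $1$, which is precisely what you have carried out (the only slip is attributing the iteration lemma to \cite{M}; it is the standard absorption lemma from elliptic regularity theory, e.g.\ Giaquinta or Giusti).
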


We apply the following result from \cite{I} in the special case when $g, h\in L^n(\Omega )$.



\begin{proposition}\label{IwaniecBalls}
Let $\Omega$ be a ball in $\mathbb R^n$ and let $g, h\in L^n(\Omega)$, $n\geq 2$,
be nonnegative functions satisfying:
\begin{equation*}
\biggl(\frac{1}{\vert B\vert }
\int\limits_{B}g(x)^n\,dx\biggr)^{\frac{1}{n}}\le
C_1
\frac{1}{\vert 2B\vert }
\int\limits_{2B}g(x)\,dx
+
\biggl(\frac{1}{\vert 2B\vert}
\int\limits_{ 2B} h(x)^n\,dx\biggr)^{\frac{1}{n}}
\end{equation*}
for all balls $B$ with $2B\subset\Omega$. Then for each $0<\sigma <1$ and $n<p<n+{(n-1)}/{10^{2n}4^n C_1^n}$
we have
\begin{equation}\label{TwoTermsIneq}
\biggl(\frac{1}{\vert \sigma\Omega\vert }
\int\limits_{\sigma\Omega}g(x)^p\,dx\biggr)^{\frac{1}{p}}
\le
\frac{100^n}{\sigma^{{n}/{p}}(1-\sigma )}
\biggl[
\biggl(
\frac{1}{\vert \Omega \vert }
\int\limits_{\Omega}g(x)^n\,dx\biggr)^{\frac{1}{n}}
+
\biggl(\frac{1}{\vert \Omega\vert}
\int\limits_{ \Omega} h(x)^p\,dx\biggr)^{\frac{1}{p}}
\biggr]\,.\notag
\end{equation}

\end{proposition}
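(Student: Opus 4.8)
The statement is quoted from \cite{I}, so strictly speaking nothing new is proved here; but let me indicate how one would argue, since the proof is a ball-version of the Gehring--Giaquinta self-improvement of a reverse H\"older inequality, carried out with explicit constants.

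The plan is to fix $\Omega=B(x_0,R_0)$ and $\sigma\in(0,1)$ and to run a stopping-time argument at each level $\lambda$. First I would pick a threshold $\lambda_0$ comparable to $\bigl(\tfrac{1}{|\Omega|}\int_\Omega g^n\bigr)^{1/n}+\bigl(\tfrac{1}{|\Omega|}\int_\Omega h^n\bigr)^{1/n}$ and, for $\lambda>\lambda_0$, perform a Calder\'on--Zygmund decomposition of the super-level set $\{g>\lambda\}$ inside a concentric ball slightly smaller than $\Omega$: this produces a family $\{B_i\}$ of balls with $2B_i\subset\Omega$, bounded overlap, with the mean of $g^n$ over $B_i$ comparable to $\lambda^n$, and with $\{g>\lambda\}\subset\bigcup_iB_i$ up to a null set. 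The factor $1/(1-\sigma)$ in the conclusion is forced here, because the dilates $2B_i$ have to be kept inside $\Omega$, while the factor $\sigma^{-n/p}$ arises at the end from renormalising the average over $\sigma\Omega$, using $|\sigma\Omega|=\sigma^n|\Omega|$.

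Next I would feed the hypothesis into each $B_i$: writing
\[
\Bigl(\tfrac{1}{|B_i|}\int_{B_i}g^n\Bigr)^{1/n}\le C_1\,\tfrac{1}{|2B_i|}\int_{2B_i}g+\Bigl(\tfrac{1}{|2B_i|}\int_{2B_i}h^n\Bigr)^{1/n},
\]
splitting each integral over $2B_i$ according to whether $g$, respectively $h$, exceeds $\eta\lambda$ for a small fixed $\eta$, then summing over $i$ and using bounded overlap, one obtains a reverse weak-type bound of the form
\[
\int_{\{g>\lambda\}}g^n\,dx\;\le\; c(n)\,C_1^{\,n}\,\lambda^{\,n-1}\!\!\int_{\{g>\eta\lambda\}}\!\!g\,dx\;+\;c(n)\!\!\int_{\{h>\eta\lambda\}}\!\!h^n\,dx,\qquad\lambda>\lambda_0.
\]
Multiplying by $\lambda^{\,p-n-1}$ and integrating in $\lambda$ over $(\lambda_0,\infty)$, Fubini turns the left side into a constant times $\int_\Omega g^p$, the $h$-term into a constant times $\int_\Omega h^p$, and the first term on the right into a multiple of $\int_\Omega g^p$ with coefficient comparable to $\tfrac{p-n}{p-1}\,C_1^{\,n}\,\eta^{\,1-p}$. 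For $p$ in the asserted range $n<p<n+\tfrac{n-1}{10^{2n}4^nC_1^{\,n}}$ this coefficient is strictly less than one, so that occurrence of $\int_\Omega g^p$ can be absorbed into the left-hand side; renormalising the averages and collecting the dimensional constants from the covering yields $100^n$ and the claimed inequality.

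The hard part will be exactly this absorption step together with the reverse weak-type estimate feeding it: the admissible exponents $p$ are dictated entirely by the requirement that the absorbed coefficient stay below $1$, so the interplay of the small parameter $\eta$ with the constant $C_1$ and with the dimensional constants in the Calder\'on--Zygmund decomposition must be tracked quantitatively. Everything else --- the covering, the Cavalieri integration in $\lambda$, and the passage from averages over $\Omega$ to averages over $\sigma\Omega$ --- is routine.
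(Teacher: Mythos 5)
The paper does not prove this proposition at all --- it is imported verbatim from Iwaniec \cite{I} (``We apply the following result from \cite{I}\dots'') --- so there is no internal proof to compare against, and your sketch correctly recognizes this while outlining exactly the argument of the cited source: Calder\'on--Zygmund decomposition of the level sets $\{g>\lambda\}$ with dilated balls kept inside $\Omega$ (whence the $1/(1-\sigma)$), a reverse weak-type estimate, Cavalieri integration against $\lambda^{p-n-1}$, and absorption of the $\int g^p$ term for $p$ close to $n$. The only part left unverified is the quantitative bookkeeping that yields the specific constants $100^n$ and the admissible range $n<p<n+(n-1)/(10^{2n}4^nC_1^n)$ --- which is precisely the content of \cite{I} --- so your treatment stands on the same footing as the paper's, which likewise defers to that reference.
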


Now we are able to obtain the weak reverse H\"older inequality with bounds for the constants.

\begin{theorem}\label{WeakReverseResult}
Let   $\Omega$ be a bounded domain in  $\mathbb R^n$, $n\geq 2$ such that
$B(0,2)\subset\Omega$.
Suppose that $\varphi :\Omega\to\mathbb R^n$ is a $K$-quasiconformal mapping
such that $C_1$ is the constant from \eqref{reduced} for $D\varphi$.
Then,
for any $p>n$ satisfying $n<p<n+{(n-1)}/{10^{2n}4^n C_1^n}$
we have
\begin{equation}\label{weakreverse}
\biggl(\frac{1}{\vert B(0,1)\vert }
\int\limits_{ B(0,1)}\vert D \varphi (x)\vert^p\,dx\biggr)^{\frac{1}{p}}\le C(n,p)
\biggl(\frac{1}{\vert B(0,2)\vert}
\int\limits_{ B(0,2)}\vert D\varphi (x)\vert^n\,dx\biggr)^{\frac{1}{n}}\,.
\end{equation}

\end{theorem}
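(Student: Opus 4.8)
The plan is to chain together the three preceding propositions, applied to the quasiconformal derivative $f = |D\varphi|$, which lies in $L^n_{\loc}(\Omega)$ since $\varphi \in W^1_{n,\loc}(\Omega)$. First I would record the starting point: Proposition~\ref{BojarskiIwaniecLemma} with $\sigma = 1/2$ says that for every ball $B$ with $2B \subset \Omega$, the scaled quantity $|D\varphi|$ satisfies the weak reverse H\"older inequality relating the $L^n$-average over $B$ to the $L^{n/2}$-average over $2B$, with constant $KC(n)/(\sigma(1-\sigma)) = 4KC(n)$ and $C(n)$ as in \eqref{ConstantLemmaBI}. This is exactly the hypothesis of Proposition~\ref{IwaniecNolderBalls} with $C_0 = 4KC(n)$, so we may invoke the iteration/self-improvement and conclude that the exponent $n/2$ on the right can be pushed down to $1$: for every ball $B$ with $2B \subset \Omega$,
\[
\biggl(\frac{1}{|B|}\int_B |D\varphi(x)|^n\,dx\biggr)^{1/n} \le C_1 \frac{1}{|2B|}\int_{2B}|D\varphi(x)|\,dx,
\]
where $C_1$ is precisely the constant named in the statement.

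Next I would feed this into Proposition~\ref{IwaniecBalls} by taking $g = |D\varphi|$ and $h \equiv 0$: the displayed hypothesis of that proposition then holds trivially (the $h$-term vanishes) for all balls $B$ with $2B \subset \Omega$, where we now take the ``$\Omega$'' of Proposition~\ref{IwaniecBalls} to be the ball $B(0,2) \subset \Omega$ — note $B(0,2)$ is a ball, as required, and balls $B$ with $2B \subset B(0,2)$ certainly satisfy $2B \subset \Omega$. The conclusion of Proposition~\ref{IwaniecBalls}, again with $h \equiv 0$ and with the choice $\sigma = 1/2$ so that $\sigma \Omega = B(0,1)$, gives the higher integrability
\[
\biggl(\frac{1}{|B(0,1)|}\int_{B(0,1)}|D\varphi(x)|^p\,dx\biggr)^{1/p} \le \frac{100^n}{(1/2)^{n/p}(1-1/2)}\biggl(\frac{1}{|B(0,2)|}\int_{B(0,2)}|D\varphi(x)|^n\,dx\biggr)^{1/n},
\]
valid for $n < p < n + (n-1)/(10^{2n}4^n C_1^n)$, which is exactly the range in the statement. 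This already has the shape of \eqref{weakreverse}, so I would set $C(n,p) = 100^n \cdot 2^{1 + n/p}$ (absorbing the factors $(1/2)^{-n/p}$ and $(1/2)^{-1}$), and the proof is complete.

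The only genuine points requiring care — rather than a real obstacle — are bookkeeping ones: verifying that $|D\varphi|$ has the required local integrability so that all three propositions apply (immediate from $\varphi \in W^1_{n,\loc}$), checking that the range of $p$ produced by Proposition~\ref{IwaniecBalls} matches the claimed range (it does, since $C_1$ is carried through verbatim), and confirming the geometric containments $2B \subset B(0,2) \subset \Omega$ at each invocation. I do not expect to need the explicit form of $C(n)$ from \eqref{ConstantLemmaBI} in this particular theorem — it only affects the value of $C_1$, which is left implicit here — though it will matter when this theorem is later used to extract explicit constants for quasi-balls. Thus the ``hard part'' is really just aligning hypotheses; the mathematical content is entirely supplied by the cited results of Bojarski--Iwaniec, Iwaniec--Nolder, and Iwaniec.
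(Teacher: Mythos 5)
Your proposal is correct and follows essentially the same route as the paper: Proposition~\ref{BojarskiIwaniecLemma} (with $\sigma=1/2$) supplies the hypothesis of Proposition~\ref{IwaniecNolderBalls}, which yields \eqref{reduced} with the constant $C_1$, and then Proposition~\ref{IwaniecBalls} applied on $B(0,2)$ with $g=|D\varphi|$, $h\equiv 0$ and $\sigma=1/2$ gives \eqref{weakreverse} with $C(n,p)=2^{1+n/p}(100)^n$, exactly as in the paper. The only difference is cosmetic: the paper additionally records the explicit value $C_1=c_0(n)(4KC(n))^{2(n-1)/n}$, which you leave implicit, consistent with the theorem statement.
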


\begin{proof}
By Proposition \ref{BojarskiIwaniecLemma}  the assumptions in Proposition \ref{IwaniecNolderBalls} are valid. Hence,
for all the balls $B$ with $2B\subset\Omega$ the inequality
\begin{equation*}
\biggl(\frac{1}{\vert B\vert }
\int\limits_{B}\vert D\varphi  (x)\vert^n\,dx\biggr)^{\frac{1}{n}}\le c_0(n)(4KC(n))^{\frac{2(n-1)}{n}}
\frac{1}{\vert 2B\vert}
\int\limits_{2 B}\vert D\varphi (x)\vert \,dx
\end{equation*}
holds with $C(n)$ from \eqref{ConstantLemmaBI}.
We write $C_1=c_{0}(n)(4KC(n))^{\frac{2(n-1)}{n}}$.
Thus the assumptions of Proposition \ref{IwaniecBalls} are valid
when $h=0$ and  $\sigma =1/2$.
Hence
by Proposition \ref{IwaniecBalls}
for any $p>n$ satisfying $n<p<n+{(n-1)}/{10^{2n}4^n C_1^n}$
we have
\begin{equation}\label{weakreverse1}
\biggl(\frac{1}{\vert B(0,1)\vert }
\int\limits_{ B(0,1)}\vert D \varphi (x)\vert^p\,dx\biggr)^{\frac{1}{p}}\le C(n,p)
\biggl(\frac{1}{\vert B(0,2)\vert}
\int\limits_{ B(0,2)}\vert D\varphi (x)\vert^n\,dx\biggr)^{\frac{1}{n}}
\end{equation}
where $C(n,p)=2^{1+{n}/{p}} (100)^n$.
\end{proof}

Now we need the doubling constant for the Jacobian of a given quasiconformal mapping. For this aim we use the moduli (capacity) estimates.
Let $\Gamma$ be a family of curves in $\mathbb R^n$. Denote by $adm(\Gamma)$ the set of Borel functions (admissible functions)
$\rho: \mathbb R^n\to[0,\infty]$ such that the inequality
$$
\int\limits_{\gamma}\rho~ds\geqslant 1
$$
holds for locally rectifiable curves $\gamma\in\Gamma$.

Let $\Gamma$ be a family of curves in $\overline{\mathbb R^n}$, where $\overline{\mathbb R^n}$ is a one point compactification of the Euclidean space $\mathbb R^n$. The quantity
$$
M(\Gamma)=\inf\int\limits_{\mathbb R^n}\rho^{n}~dx
$$
is called the module of the family of curves $\Gamma$ (see, for example \cite{Vuo}). The infimum is taken over all admissible functions
$\rho\in adm(\Gamma)$.

Let $\Omega$ be a bounded domain in $\mathbb R^n$ and $F_0, F_1$ be disjoint non-empty compact sets in the
closure of $\Omega$, then $M(\Gamma(F_0,F_1;\Omega))$ stand for the
module of a family of curves which connect $F_0$ and $F_1$ in $\Omega$

\begin{theorem}\label{doubling}
Suppose that  $\varphi :\mathbb R^n\to\mathbb R^n$  is a $K$-quasiconformal mapping. Then
for any ball $B$,
\begin{equation*}
\int\limits_{2B}|J(x,\varphi)|\,dx\le
\exp \biggl\{K^{1/(n-1)}2(\log (\sqrt{3}+\sqrt{2})+n-1)\biggr\}
\int\limits_{B}|J(x,\varphi)|\,dx\,.
\end{equation*}
\end{theorem}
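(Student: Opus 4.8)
The plan is to control the doubling ratio $\int_{2B}|J(x,\varphi)|\,dx \big/ \int_{B}|J(x,\varphi)|\,dx$ by a capacity (module) argument. For a fixed ball $B=B(a,r)$, set $2B=B(a,2r)$ and consider the spherical ring (condenser) $R = \{x : r < |x-a| < 2r\}$ together with the condenser whose plates are $\overline{B(a,r)}$ and $\mathbb R^n\setminus B(a,2r)$. First I would recall the standard identification between the capacity of this condenser and the module of the curve family $\Gamma$ joining the two plates, namely $M(\Gamma(\overline{B(a,r)}, \mathbb R^n\setminus B(a,2r); \mathbb R^n)) = \omega_{n-1}(\log 2)^{1-n}$ — in fact I only need that this quantity is a fixed finite number, and more importantly its behaviour under the quasiconformal map $\varphi$. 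The key point is the quasi-invariance of the module: $M(\varphi\Gamma) \le K^{?}\, M(\Gamma)$ and $M(\varphi\Gamma)\ge K^{-?}M(\Gamma)$ with the appropriate power of $K$ (this is where $K^{1/(n-1)}$ will enter, since for the $n$-module of curve families the distortion exponent is $1/(n-1)$ on one side).

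Next I would translate the module of the image family back into a geometric statement about $\varphi(B)$ and $\varphi(2B)$. Because $\varphi(\overline{B(a,r)})$ and the complement of $\varphi(B(a,2r))$ are separated by the image ring, and the module of the separating family is comparable (up to the $K$-power) to the module of the round ring, one obtains a bound of the form
\[
\log \frac{\text{``outer size'' of }\varphi(2B)}{\text{``inner size'' of }\varphi(B)} \le K^{1/(n-1)}\bigl(\log(\sqrt 3+\sqrt 2)+n-1\bigr),
\]
the precise constants $\log(\sqrt3+\sqrt2)$ and $n-1$ coming from the known Grötzsch/Teichmüller estimates for the module of a ring in terms of the ratio of radii (the Teichmüller ring constant $\lambda_n$, estimated via $\lambda_n \le \exp(n-1)\,2^{?}$). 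Then, since $\varphi$ has the Luzin $N$-property and the change-of-variables formula holds, $\int_{B}|J(x,\varphi)|\,dx = |\varphi(B)|$ and $\int_{2B}|J(x,\varphi)|\,dx = |\varphi(2B)|$, so the doubling inequality for the Jacobian integral is exactly a volume-ratio estimate $|\varphi(2B)| \le C\,|\varphi(B)|$. Exponentiating the logarithmic module bound and raising to the power $n$ (to pass from a linear-size ratio to a volume ratio, which accounts for the factor $2$ in $2(\log(\sqrt3+\sqrt2)+n-1)$ — one power from the module inequality and the volume being an $n$-th power, combined appropriately) yields the stated constant $\exp\{K^{1/(n-1)}\,2(\log(\sqrt3+\sqrt2)+n-1)\}$.

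The main obstacle, and the step requiring the most care, is bookkeeping the exact constants: one must pin down which Teichmüller/Grötzsch ring estimate is used, with its explicit bound (the $\sqrt3+\sqrt2$ comes from evaluating the capacity of a concrete ring configuration, e.g. via the substitution relating the round ring of ratio $2$ to a Teichmüller ring), and then track precisely how the quasiconformal module distortion with exponent $1/(n-1)$ interacts with taking $n$-th powers to get volumes. The topological facts — that $\varphi(B)$ and $\varphi(2B)$ are again Jordan domains with $\varphi(B)\Subset\varphi(2B)$, and that the annulus $2B\setminus \overline B$ maps to a ring domain separating them — are immediate from $\varphi$ being a homeomorphism, so they are not the difficulty; the difficulty is purely the sharp constant tracking. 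I would organize the computation so that the module inequality is applied once, cleanly, and all numerical simplifications ($5\omega_n$, powers of $2$, the $\log(\sqrt3+\sqrt2)$ term) are collected at the very end.
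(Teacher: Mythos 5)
Your toolkit is the right one (quasi-invariance of the $n$-module, the spherical ring estimate, the Teichm\"uller ring bound with $\lambda_n\le 2e^{n-1}$, and the identity $\int_B|J(x,\varphi)|\,dx=|\varphi(B)|$), but the concrete condenser you set up does not yield the inequality you need, and this is a genuine gap rather than a bookkeeping issue. If $\Gamma$ is the family joining $\overline{B(a,r)}$ to $\mathbb R^n\setminus B(a,2r)$ in the round annulus, then pushing it forward and combining $M(\varphi\Gamma)\ge K^{-1}M(\Gamma)$ with the spherical-ring upper bound on the image side only gives
\[
\log\frac{\dist\bigl(\varphi(a),\partial\varphi(2B)\bigr)}{\max\{|y-\varphi(a)|:\ y\in\partial\varphi(B)\}}\ \le\ \mathrm{const}(n,K),
\]
i.e.\ it bounds the \emph{inner} radius of $\varphi(2B)$ by the \emph{outer} radius of $\varphi(B)$; the opposite inequality $M(\varphi\Gamma)\le K M(\Gamma)$, fed into the Teichm\"uller estimate, gives a \emph{lower} bound for the same kind of ratio. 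Neither controls the quantity the doubling inequality actually requires, namely $R=\max\{|y-\varphi(a)|:\ y\in\partial\varphi(2B)\}$ against $r=\dist(\varphi(a),\partial\varphi(B))$ (a bounded ring module does not prevent $\varphi(B)$ from being long and thin, so $R/r$ is simply not controlled by your configuration). The paper resolves this by choosing the continua on the \emph{image} side so that they realize exactly these radii: a segment $L_r$ of length $r$ from $\varphi(a)$ to $\partial\varphi(B)$, and a continuum $L_R$ from $\partial B(\varphi(a),R)\cap\partial\varphi(2B)$ to $\infty$ outside $B(\varphi(a),R)$. Then the image-side module of $\Gamma(L_r,L_R)$ is at most $\omega_{n-1}(\log(R/r))^{1-n}$, while the preimages $\varphi^{-1}(L_r)$ and $\varphi^{-1}(L_R)$ join the center to $\partial B$ and $\partial(2B)$ to $\infty$, so the domain-side module is bounded below by the Teichm\"uller bound $\tau_n(2)\ge 2^{1-n}\omega_{n-1}\bigl(\log\frac{\lambda_n}{2}(\sqrt3+\sqrt2)\bigr)^{1-n}$; quasi-invariance then gives $\log(R/r)\le K^{1/(n-1)}\,2\log\bigl(\frac{\lambda_n}{2}(\sqrt3+\sqrt2)\bigr)$. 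This choice of continua, pulled back rather than pushed forward, is the missing idea; without it your ``translate back into a geometric statement'' step fails.

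Your accounting of the constant is also off. The factor $2$ in $2(\log(\sqrt3+\sqrt2)+n-1)$ does not come from passing from a linear ratio to a volume (an $n$-th power would contribute a factor $n$, not $2$, and would change the stated constant); it comes from the $2^{1-n}$ in the Teichm\"uller lower bound, which after extracting $(n-1)$-th roots multiplies the logarithm by $2$, while the summand $n-1$ enters through $\lambda_n\le 2e^{n-1}$. The passage from the bound on $R/r$ to the measure inequality, via $|\varphi(2B)|\le\omega_n R^n$ and $|\varphi(B)|\ge\omega_n r^n$, is a separate final step that should be stated explicitly rather than absorbed into the factor $2$; your attempt to fold the volume exponent into that factor is not consistent arithmetic and would not reproduce the constant in the statement.
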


\begin{proof}
When $n$ and $K$ are given,
our goal is to find the constant $C$ such that
\begin{equation}\label{Find_C}
\frac{\vert \varphi(B(0,2))\vert}{\vert \varphi(B(0,1))\vert}\le C\,.
\end{equation}
That is, to find $C$ such that
\begin{equation*}
\int\limits_{B(0,2)}|J(x,\varphi)|\,dx\le C
\int\limits_{B(0,1)}|J(x,\varphi)|\,dx\,.
\end{equation*}
Let us write
$R=\max\vert \varphi(0)-y\vert \mbox{ when } y\in\partial \varphi(B(0,2))$
and
$r=\dist (\varphi(0),\partial \varphi(B(0,1))$.

Denote by $L_r$ a line segment of length $r$ joining $\varphi(0)$ to $\partial \varphi(B(0,1))$ and
$L_R$ a continuum joining a point in $\partial B(\varphi(0),R)\cap\partial \varphi(B(0,2))$ to $\infty$ in
$\mathbb R^n\backslash B(\varphi(0),R)$.
Then by the quasi-invariance of the modulus
\begin{multline*}
0< C_1\le \mod (\varphi^{-1}(L_r),\varphi^{-1}(L_R), B(0,10))\\
\le K\mod (L_r,L_R,\mathbb R^n)\le K{\omega_{n-1}}{\biggl(\log\frac{R}{r}\biggr)^{1-n}}\,,
\end{multline*}
where $\omega_{n-1}=\frac{2 \pi^{n/2}}{\Gamma (n/2)}$ is the hypervolume of the $(n-1)$-dimensional unit sphere.

We write
\begin{equation}\label{Teichmuller}
R_T(n,t)=R([-1,0], [t,\infty );\mathbb R^n )
=\mathbb R^n\backslash\{[-e_1,0]\cup [te_1,\infty )\}\,, t>0
\end{equation}
for the $n$-dimensional Teichm\"uller ring corresponding $t$ and also
$\tau_n(t)=\mod (R_T(n,t) )$.
Let us recall that
\begin{equation*}
\mod (\Delta (E,F))\geq \tau _n\bigg(\frac{\vert a-c\vert}{\vert a-b\vert}\biggr)\,.
\end{equation*}
This equality holds for $E=[-e_1,0]$, $a=0, b=-e_1$ and
$F=[te_1,\infty )$, $c=te_1$\,, $d=\infty$
(see, for example \cite{Ra}).

Let
$R(E,F))$ be a ring with $a,b\in E$ and $c,\infty\in F$.
Then  for the Teichm\"uller ring $R_T(n,t)$ we have the estimate
$$
\mod (R(E,F))\geq \mod R_T\bigg(n,\frac{\vert a-c\vert}{\vert a-b\vert}\biggr),
$$
where 
$$
\mod (R_T(n,t))=\tau_n(t),\,\,\,
\omega_{n-1}(\log (\lambda_n^2t))^{1-n}\le\tau_n(t-1)\le\omega_{n-1}(\log (t))^{1-n},
$$
see, for example \cite{GMP}.

By \cite{Ra}, based on \cite{Vuo},
 \begin{equation*}
\tau_n(t)\geq 2^{1-n}\omega_{n-1}\biggl(\log\frac{\lambda_n}{2}(\sqrt{1+t}+\sqrt{t})\biggr)^{1-n}\,,\,\, t>0\,,
\end{equation*}
where $\lambda_n$ is the Gr\"otzsch ring constant depending only on $n$.
It is know that $\lambda_2=4$, and for $n\geq 3$ it is known only that
$2^{0.76(n-1)}\le\lambda_n\le 2e^{n-1}$.
This gives a lower bound
\begin{equation*}
\tau_n(t=2)\geq 2^{1-n}\omega_{n-1}\biggl(\log\frac{\lambda_n}{2}(\sqrt{3}+\sqrt{2})\biggr)^{1-n}\,,\,\,\text{where}\,\,4\le \lambda_n\le 2e^{n-1}\,.
\end{equation*}

Hence we obtain
$$
C_1=2^{1-n}\omega_{n-1}\biggl(\log\frac{\lambda_n}{2}(\sqrt{3}+\sqrt{2})\biggr)^{1-n}
\le K\omega_{n-1}\biggl(\log\frac{R}{r}\biggr)^{1-n}\,.
$$
Thus,
\begin{multline*}
\biggl(\log\frac{R}{r}\biggr)^{n-1}
\le  \frac{K\omega_{n-1}}{2^{1-n}\omega_{n-1}\biggl(\log\frac{\lambda_n}{2}(\sqrt{3}+\sqrt{2})\biggr)^{1-n}}\\
\le K \biggl(2\log (\frac{\lambda_n}{2}(\sqrt{3}+\sqrt{2}))\biggr)^{n-1}\,,
\end{multline*}
and
\begin{equation*}
\log\frac{R}{r}\le K^{1/(n-1)}2\log(\frac{\lambda_n}{2}(\sqrt{3}+\sqrt{2}))\,.
\end{equation*}
Hence
\begin{equation*}
\frac{R}{r}\le \exp \biggl\{K^{1/(n-1)}2\log(\frac{\lambda_n}{2}(\sqrt{3}+\sqrt{2}))\biggr\}\,,
\end{equation*}
and since $\lambda_n < 2e^{n-1}$, we obtain
\begin{multline*}
\frac{R}{r}\le \exp \biggl\{K^{1/(n-1)}2\log(\sqrt{3}+\sqrt{2})e^{n-1}\biggr\}
\\
= \exp \biggl\{K^{1/(n-1)}2(\log(\sqrt{3}+\sqrt{2})+n-1)\biggr\}\,.
\end{multline*}

\end{proof}

Using the previous results concerning the weak
reverse H\"older inequality and the measure doubling condition we obtain
an estimate of the constant in the reverse H\"older inequality for Jacobians of
quasiconformal mappings.

\begin{theorem}\label{th-rhin}
Let $\Omega$ be a bounded domain in $\mathbb R^n$ such that $B(0,2)\subset\Omega$.
Suppose that $\varphi : \Omega\to\Omega'$ is a $K$-quasiconformal mapping.
Then there exists a constant $C(n,\alpha , K)$  such that
\begin{equation*}
\biggl(\int\limits_{B(0,1)}\vert J(x,\varphi)\vert ^{\frac{\alpha}{ n}}\,dx\biggr)^{\frac{n}{\alpha}}
\le C(n,\alpha , K)
\int\limits_{B(0,1)} |J(x,\varphi)|\,dx
\end{equation*}
for all $\alpha\in \left(n, n+{(n-1)}/{10^{2n}4^nC_1^n}\right)$
where
\[
C_1=2^{2n+3/2+1/n}\bigl(
{n}/{(n-2)}\bigr)^{1/n}5 \omega_{n} (4K)^{2(n-1)/n}\,,\quad  n>2,
\]
and
\[
C(n,\alpha , K)=2^{1-n+n/\alpha}(100)^n K \omega_{n}^{n/{\alpha -1}}
\exp \biggl\{K^{1/(n-1)}2(\log(\sqrt{3}+\sqrt{2})+n-1)\biggr\}\,.
\]
\end{theorem}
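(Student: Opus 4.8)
The plan is to transfer the weak reverse H\"older inequality for the norm $|D\varphi|$ of the differential, established in Theorem~\ref{WeakReverseResult}, to the Jacobian $|J(x,\varphi)|$. The bridge is the two-sided comparison of $|J(x,\varphi)|$ with $|D\varphi(x)|^{n}$: Hadamard's inequality $|J(x,\varphi)|\le|D\varphi(x)|^{n}$ on one side and the $K$-quasiconformality inequality $|D\varphi(x)|^{n}\le K|J(x,\varphi)|$ on the other. Plugging these into the weak reverse H\"older estimate yields a reverse H\"older inequality for $|J(x,\varphi)|$ but with the integral on the right taken over the larger ball $B(0,2)$; the measure-doubling estimate of Theorem~\ref{doubling} is then used to replace $\int_{B(0,2)}|J(x,\varphi)|\,dx$ by a constant multiple of $\int_{B(0,1)}|J(x,\varphi)|\,dx$.

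In detail, I would proceed as follows. By Hadamard's inequality, $|J(x,\varphi)|^{\alpha/n}\le|D\varphi(x)|^{\alpha}$ almost everywhere, hence
\[
\biggl(\int_{B(0,1)}|J(x,\varphi)|^{\frac{\alpha}{n}}\,dx\biggr)^{\frac{n}{\alpha}}\le\biggl(\int_{B(0,1)}|D\varphi(x)|^{\alpha}\,dx\biggr)^{\frac{n}{\alpha}}.
\]
Since $\alpha$ lies in the interval $\bigl(n,\,n+(n-1)/(10^{2n}4^{n}C_{1}^{n})\bigr)$, which is precisely the admissibility range of Theorem~\ref{WeakReverseResult} with $p=\alpha$, that theorem applies; rewriting its normalized form in non-normalized form via $|B(0,1)|=\omega_{n}$, $|B(0,2)|=2^{n}\omega_{n}$ and raising to the power $n$ bounds $\bigl(\int_{B(0,1)}|D\varphi(x)|^{\alpha}\,dx\bigr)^{n/\alpha}$ by a constant built from $\omega_{n}$ and $C(n,\alpha)=2^{1+n/\alpha}100^{n}$ times $\int_{B(0,2)}|D\varphi(x)|^{n}\,dx$. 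The quasiconformality inequality $|D\varphi(x)|^{n}\le K|J(x,\varphi)|$ replaces this last integral by $K\int_{B(0,2)}|J(x,\varphi)|\,dx$, and the doubling estimate of Theorem~\ref{doubling}, applied to the ball $B(0,1)$, replaces $\int_{B(0,2)}|J(x,\varphi)|\,dx$ by $\exp\bigl\{K^{1/(n-1)}2(\log(\sqrt{3}+\sqrt{2})+n-1)\bigr\}\int_{B(0,1)}|J(x,\varphi)|\,dx$. Collecting the constants gives the asserted inequality with $C(n,\alpha,K)$.

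The part I expect to require the most care is not any single deep estimate but the constant bookkeeping, on three fronts. First, one must check that the constant $C_{1}$ in the admissibility interval — declared in the statement as $2^{2n+3/2+1/n}(n/(n-2))^{1/n}5\omega_{n}(4K)^{2(n-1)/n}$ — is consistent with the one produced inside the proof of Theorem~\ref{WeakReverseResult}, where it appears as $c_{0}(n)(4KC(n))^{2(n-1)/n}$ with $C(n)$ the explicit constant \eqref{ConstantLemmaBI} of Proposition~\ref{BojarskiIwaniecLemma} and $c_{0}(n)$ the iteration constant of Proposition~\ref{IwaniecNolderBalls}. Second, one must carry the exponents $1/\alpha$, $1/n$ and the final power $n$ through the chain without error, since $C(n,\alpha)$ enters raised to a power. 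Third, a small technical point: Theorem~\ref{doubling} is stated for a $K$-quasiconformal map of all of $\mathbb R^{n}$, whereas here $\varphi$ is defined only on $\Omega\supset B(0,2)$, and the modulus argument behind it uses the ball $B(0,10)$; so one should either strengthen the hypothesis so that there is enough room around $B(0,2)$ or localize that modulus argument. Modulo this, the proof is the concatenation of the estimates above.
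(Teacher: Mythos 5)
Your proposal follows essentially the same route as the paper's own proof: Hadamard's inequality to pass from $|J(x,\varphi)|^{\alpha/n}$ to $|D\varphi(x)|^{\alpha}$, the weak reverse H\"older inequality of Theorem~\ref{WeakReverseResult} with $p=\alpha$, the quasiconformality bound $|D\varphi(x)|^{n}\le K|J(x,\varphi)|$, and the doubling estimate of Theorem~\ref{doubling} to return from $B(0,2)$ to $B(0,1)$, with the same constant bookkeeping. The cautions you raise (the power to which $C(n,\alpha)$ enters after raising to the $n$-th power, and the fact that Theorem~\ref{doubling} is stated for mappings of all of $\mathbb R^{n}$ while $\varphi$ here is only defined on $\Omega$) are legitimate points that the paper itself passes over silently, so they do not indicate a gap in your argument relative to the paper's.
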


\begin{proof}
By Proposition \ref{BojarskiIwaniecLemma} inequality \eqref{BIEquation} is valid
where  equation \eqref{ConstantLemmaBI} gives a constant.
Hence, by Proposition \ref{IwaniecNolderBalls}  inequality \eqref{reduced}
is valid  for $D\varphi$ with the constant $C_1$.
Let us choose
\begin{equation*}
\alpha \in \biggl(n, n+\frac{n-1}{10^{2n}4^nC_1^n}\biggr).
\end{equation*}
Now for these values of $\alpha$ by Theorem \ref{WeakReverseResult} we obtain
\begin{align*}
\biggl(
\int\limits_{B(0,1)}\vert J(x,\varphi)\vert ^{\frac{\alpha}{n}}\,dx
\biggr)^{\frac{n}{\alpha}}
&\le
\vert B(0,1)\vert ^{n/{\alpha}}
\biggl(
\frac{1}{\vert B(0,1)\vert}
\int\limits_{B(0,1)}\vert D\varphi (x)\vert ^{\alpha}\,dx
\biggr)^{\frac{n}{\alpha}}\\
&\le
2^{1+n/\alpha}(100)^n
\frac{\vert B(0,1)\vert^{n/{\alpha}}}{\vert B(0,2)\vert }
\int\limits_{B(0,2)}\vert D\varphi (x)\vert ^{n}\,dx\,.
\end{align*}
By the definition of the quasiconformal mappings and the doubling condition (Theorem \ref{doubling})
\begin{multline*}
\biggl(
\int\limits_{B(0,1)}\vert J(x,\varphi)\vert ^{\frac{\alpha}{n}}\,dx
\biggr)^{\frac{n}{\alpha}}
\le
2^{1-n+n/\alpha}(100)^n K
\omega_{n}^{n/{\alpha}-1}
\int\limits_{B(0,2)}\vert J(x,\varphi)\vert \,dx\\
\le
C(\alpha , n, K)
\int\limits_{B(0,1)}\vert J(x,\varphi)\vert \,dx\,.
\end{multline*}

\end{proof}

Given this theorem, we can precise Theorem~\ref{thm:estball}, putting $\alpha/n=\beta$.

\begin{theorem}
\label{thm:main}
Let $\Omega$ be a $K$-quasiconformal $\beta$-regular domain about the unit ball $\mathbb B$, $r=p\beta/(\beta-1)$, $p>n$.
Then
\[
\frac{1}{\mu_p(\Omega)}\leq \\ \inf\limits_{q \in (q^{\ast}, n]} \left\{\frac{2^{np}}{n^p}\left(\frac{1-\frac{1}{q}+\frac{1}{r}}{\frac{1}{n}-\frac{1}{q}+\frac{1}{r}}\right)^{p-\frac{p}{q}+\frac{p}{r}} \omega_n^{\frac{p}{r}-\frac{p}{n}}\right\} K^{\frac{p}{n}} C(n, \beta, K) \cdot|\Omega|^{p/n}
\]
for all $\beta \in \left(1, 1+{(n-1)}/{n10^{2n}4^nC_1^n}\right)$,
where $q^{\ast}=\beta np/(\beta p +n(\beta-1))$ and
\[
C_1=2^{2n+3/2+1/n}\bigl(
{n}/{(n-2)}\bigr)^{1/n}5 \omega_{n} (4K)^{2(n-1)/n}\,,\quad  n>2,
\]
\[
C(n,\beta , K)=2^{1-n+1/\beta}(100)^n K \omega_{n}^{1/\beta -1}
\exp \biggl\{K^{1/(n-1)}2(\log(\sqrt{3}+\sqrt{2})+n-1)\biggr\}\,.
\]
\end{theorem}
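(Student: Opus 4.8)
The plan is to obtain Theorem~\ref{thm:main} by feeding the reverse H\"older estimate of Theorem~\ref{th-rhin} into the lower bound of Theorem~\ref{thm:estball}, the sole point of contact being the change of variable $\alpha=n\beta$. Concretely: since $\Omega$ is a $K$-quasiconformal $\beta$-regular domain about $\mathbb{B}$, fix a $K$-quasiconformal mapping $\varphi\colon\mathbb{B}\to\Omega$ with $J_{\varphi}\in L_{\beta}(\mathbb{B})$ (for a quasi-ball this is the restriction of a $K$-quasiconformal self-homeomorphism of $\mathbb{R}^n$, so $\varphi$ is $K$-quasiconformal on a bounded domain strictly containing $\overline{B(0,2)}$). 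Theorem~\ref{thm:estball} then already gives
\[
\frac{1}{\mu_p(\Omega)}\leq\inf_{q\in(q^{\ast},n]}\left\{\frac{2^{np}}{n^p}\left(\frac{1-\frac1q+\frac1r}{\frac1n-\frac1q+\frac1r}\right)^{p-\frac pq+\frac pr}\omega_n^{\frac pr-\frac pn}\right\}K^{\frac pn}|\Omega|^{\frac{p-n}{n}}\,\|J_{\varphi}\,|\,L_{\beta}(\mathbb{B})\|,
\]
with $r=p\beta/(\beta-1)$ and $q^{\ast}=\beta np/(\beta p+n(\beta-1))$, so the entire task reduces to bounding the factor $\|J_{\varphi}\,|\,L_{\beta}(\mathbb{B})\|$ in terms of $|\Omega|$.

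To do this I would set $\alpha=n\beta$ and observe that the admissibility condition $\beta\in\bigl(1,\,1+(n-1)/(n\,10^{2n}4^nC_1^n)\bigr)$ is exactly $\alpha\in\bigl(n,\,n+(n-1)/(10^{2n}4^nC_1^n)\bigr)$, the range required in Theorem~\ref{th-rhin}, with the same $C_1=2^{2n+3/2+1/n}(n/(n-2))^{1/n}5\,\omega_n(4K)^{2(n-1)/n}$. Applying Theorem~\ref{th-rhin} to $\varphi$ yields
\[
\left(\int_{B(0,1)}|J(x,\varphi)|^{\alpha/n}\,dx\right)^{n/\alpha}\leq C(n,\alpha,K)\int_{B(0,1)}|J(x,\varphi)|\,dx .
\]
Since $\alpha/n=\beta$ and $n/\alpha=1/\beta$, the left-hand side is precisely $\|J_{\varphi}\,|\,L_{\beta}(\mathbb{B})\|$; and by the change of variable formula for quasiconformal mappings (which enjoy the Luzin $N$-property) the right-hand integral equals $|\varphi(\mathbb{B})|=|\Omega|$. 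Finally, rewriting the constant $C(n,\alpha,K)=2^{1-n+n/\alpha}(100)^nK\,\omega_n^{n/\alpha-1}\exp\{K^{1/(n-1)}2(\log(\sqrt3+\sqrt2)+n-1)\}$ under $n/\alpha=1/\beta$ turns it into the displayed $C(n,\beta,K)$, so that $\|J_{\varphi}\,|\,L_{\beta}(\mathbb{B})\|\leq C(n,\beta,K)\,|\Omega|$.

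Substituting this into the inequality from Theorem~\ref{thm:estball} and using $|\Omega|^{(p-n)/n}\cdot|\Omega|=|\Omega|^{p/n}$ gives the asserted bound; the infimum prefactor and the value of $q^{\ast}$ are carried over verbatim. Essentially all remaining work is exponent bookkeeping. The two points that need genuine care are: (i) the bijection between the admissible ranges and between the constants $C(n,\alpha,K)$ and $C(n,\beta,K)$ under $\alpha=n\beta$, which must be stated cleanly so the $C_1$ here is identified with the one $C_1=c_0(n)(4KC(n))^{2(n-1)/n}$ produced in the proof of Theorem~\ref{WeakReverseResult} with $C(n)$ from \eqref{ConstantLemmaBI}; and (ii) the fact that Theorem~\ref{th-rhin} requires $\varphi$ to be $K$-quasiconformal on a domain strictly containing $B(0,2)$ and not merely on $\mathbb{B}$ itself. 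The latter is the real obstacle in the general statement: it is automatic for quasi-balls, where $\varphi$ extends to a $K$-quasiconformal homeomorphism of $\mathbb{R}^n$, and this is the setting in which the theorem is meant to be read.
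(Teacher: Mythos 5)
Your proposal follows essentially the same route as the paper: apply Theorem~\ref{thm:estball}, then bound $\|J_{\varphi}\,|\,L_{\beta}(\mathbb B)\|\leq C(n,\beta,K)\,|\Omega|$ via Theorem~\ref{th-rhin} with $\alpha=n\beta$, and combine the powers of $|\Omega|$. Your observation in point (ii) -- that Theorem~\ref{th-rhin} needs $\varphi$ to be quasiconformal on a domain containing $B(0,2)$, which the quasi-ball setting of Theorem~\ref{main} supplies but the bare hypothesis ``$\beta$-regular about $\mathbb B$'' does not -- is a legitimate refinement that the paper's own proof passes over silently.
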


\begin{proof}
According to Theorem~\ref{thm:estball} we have
\begin{multline}\label{Ineq-1}
\frac{1}{\mu_p(\Omega)}
\leq \\
\inf\limits_{q \in (q^{\ast}, n]} \left\{\frac{2^{np}}{n^p}\left(\frac{1-\frac{1}{q}+\frac{1}{r}}{\frac{1}{n}-\frac{1}{q}+\frac{1}{r}}\right)^{p-\frac{p}{q}+\frac{p}{r}} \omega_n^{\frac{p}{r}-\frac{p}{n}}\right\} K^{\frac{p}{n}} |\Omega|^{\frac{p-n}{n}} \cdot ||J_{\varphi}\,|\,L_{\beta}(\mathbb B)||,
\end{multline}
where $q^{\ast}=\beta np/(\beta p +n(\beta-1))$.
Now we estimate the integral from the right side of last inequality.
Given Theorem~\ref{th-rhin} for
$\beta \in \left(1, 1+{(n-1)}/{n10^{2n}4^nC_1^n}\right)$
we get
\begin{equation}\label{Ineq-2}
||J_{\varphi}\,|\,L_{\beta}(\mathbb B)||= \left(\int\limits_{\mathbb B} |J(x,\varphi)|^{\beta}\,dx \right)^{\frac{1}{\beta}}
\leq C(n, \beta ,K) |\Omega|.
\end{equation}
Combining inequalities~\eqref{Ineq-1} and~\eqref{Ineq-2} we obtain the required inequality.

\end{proof}

This theorem immediately follows

\begin{theorem}
\label{main}
Let a domain $\Omega\subset\mathbb R^n$, $n\geq 3$, satisfying the $\gamma$-quasihyperbolic boundary condition, be a $K$-quasi-ball. Then
$$
\mu_p(\Omega)\geq \frac{M_p(K,\gamma)}{R_{*}^p},
$$
where $R_{\ast}$ is a radius of a ball $\Omega^{\ast}$ of the same measure as $\Omega$ and $M_p(K,\gamma)$ depends only on $p,\gamma$ and the quasiconformity coefficient $K$ of $\Omega$.
\end{theorem}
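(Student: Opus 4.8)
The plan is to reduce Theorem~\ref{main} to Theorem~\ref{thm:main} by checking that a $K$-quasi-ball satisfying the $\gamma$-quasihyperbolic boundary condition is a $K$-quasiconformal $\beta$-regular domain about the unit ball $\mathbb B$ for an admissible exponent $\beta$, and then rewriting the resulting bound in terms of the radius $R_\ast$ of the ball $\Omega^\ast$ of equal measure. First I would recall that, by definition, a $K$-quasi-ball is the image $\Omega=\varphi(\mathbb B)$ of the unit ball under a $K$-quasiconformal mapping $\varphi:\mathbb R^n\to\mathbb R^n$; after a preliminary affine normalization (translation and scaling of the source ball, which does not change the quasiconformity constant) we may assume $B(0,2)\subset\mathbb R^n$ is the domain on which the reverse H\"older machinery of Section~4 applies, or equivalently invoke Theorem~\ref{th-rhin} directly for $\varphi$ restricted to a ball containing $\mathbb B$. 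The point of the $\gamma$-quasihyperbolic hypothesis, via \cite{AK}, is that it guarantees $J_\varphi\in L_\beta(\mathbb B)$ for some $\beta>1$; combined with the fact that Jacobians of global quasiconformal mappings satisfy a reverse H\"older inequality \cite{BI,S93}, one may in fact take $\beta$ in the admissible range $\left(1,\,1+{(n-1)}/{n\,10^{2n}4^nC_1^n}\right)$ appearing in Theorem~\ref{thm:main}, with $\beta$ depending only on $n$, $K$ and $\gamma$.

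Next I would apply Theorem~\ref{thm:main} with this $\beta$. That gives
\[
\frac{1}{\mu_p(\Omega)}\leq \inf\limits_{q \in (q^{\ast}, n]} \left\{\frac{2^{np}}{n^p}\left(\frac{1-\frac{1}{q}+\frac{1}{r}}{\frac{1}{n}-\frac{1}{q}+\frac{1}{r}}\right)^{p-\frac{p}{q}+\frac{p}{r}} \omega_n^{\frac{p}{r}-\frac{p}{n}}\right\} K^{\frac{p}{n}} C(n, \beta, K)\cdot |\Omega|^{p/n},
\]
where $r=p\beta/(\beta-1)$ and $q^\ast=\beta np/(\beta p+n(\beta-1))$, all of which depend only on $n,p,K,\gamma$. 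Now I would use that $\Omega^\ast$ is the ball of the same measure as $\Omega$, so $|\Omega|=|\Omega^\ast|=\omega_n R_\ast^n$, whence $|\Omega|^{p/n}=\omega_n^{p/n}R_\ast^p$. Substituting and collecting every factor that does not involve $R_\ast$ into a single constant
\[
\frac{1}{M_p(K,\gamma)}:=\inf\limits_{q \in (q^{\ast}, n]} \left\{\frac{2^{np}}{n^p}\left(\frac{1-\frac{1}{q}+\frac{1}{r}}{\frac{1}{n}-\frac{1}{q}+\frac{1}{r}}\right)^{p-\frac{p}{q}+\frac{p}{r}} \omega_n^{\frac{p}{r}-\frac{p}{n}}\right\} K^{\frac{p}{n}} C(n, \beta, K)\,\omega_n^{p/n}
\]
yields $1/\mu_p(\Omega)\leq R_\ast^p/M_p(K,\gamma)$, i.e. $\mu_p(\Omega)\geq M_p(K,\gamma)/R_\ast^p$, which is the claim. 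Since $\beta$, $r$, $q^\ast$, $C_1$ and $C(n,\beta,K)$ all depend only on $n$, $K$, $\gamma$ (and $p$), so does $M_p(K,\gamma)$.

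The main obstacle is the first step: extracting from the $\gamma$-quasihyperbolic boundary condition a \emph{quantitative} admissible integrability exponent $\beta=\beta(n,K,\gamma)$ lying in the narrow interval $\left(1,\,1+{(n-1)}/{n\,10^{2n}4^nC_1^n}\right)$ dictated by Theorem~\ref{thm:main}. The qualitative statement $J_\varphi\in L_\beta(\mathbb B)$ for \emph{some} $\beta>1$ is exactly \cite{AK}, but one must ensure the dependence of $\beta$ on the data is explicit and that $\beta$ can be taken small enough to be admissible; since the admissible interval has its left endpoint at $1$ and the reverse H\"older exponent of a global quasiconformal Jacobian is bounded below in terms of $n$ and $K$ alone, this is automatic once one notes that any $\beta$ strictly between $1$ and the self-improving exponent works, and in particular any sufficiently small such $\beta$. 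The remaining steps — the affine normalization, the application of Theorem~\ref{thm:main}, and the substitution $|\Omega|=\omega_nR_\ast^n$ — are routine bookkeeping.
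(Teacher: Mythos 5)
Your proposal is correct and follows essentially the same route as the paper: the $\gamma$-quasihyperbolic boundary condition (via the Astala--Koskela result) gives $K$-quasiconformal $\beta$-regularity about $\mathbb B$, then Theorem~\ref{thm:main} is applied and $|\Omega|^{p/n}=\omega_n^{p/n}R_\ast^p$ is substituted to produce $M_p(K,\gamma)$. Your additional care in checking that $\beta$ can be chosen inside the admissible interval $\bigl(1,\,1+(n-1)/(n\,10^{2n}4^nC_1^n)\bigr)$ is a point the paper glosses over, but the argument is the same.
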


\begin{proof}
Because the domain $\Omega$ satisfies the $\gamma$-quasihyperbolic boundary condition, it is a $K$-quasiconformal $\beta$-regular domain with some $\beta=\beta(\gamma)$. Then by the previous theorem
$$
\mu_p(\Omega)\geq \frac{M_p(K,\gamma)}{R_{*}^p},
$$
where
$$
M_p(K,\gamma)=\frac{2^{np}K^{\frac{p}{n}}}{n^p}\left(\frac{1-\frac{1}{q}+\frac{1}{r}}{\frac{1}{n}-\frac{1}{q}+\frac{1}{r}}\right)^{p-\frac{p}{q}+\frac{p}{r}} (\omega_n)^{\frac{p}{r}} C(n, \beta(\gamma), K),
$$
for some $q\in (q^{\ast},n]$.

\end{proof}

\vskip 0.3cm

\textbf{Acknowledgements.} The first author was supported by the United States-Israel Binational Science Foundation (BSF Grant No. 2014055). 
The second author,  whose visit to the Ben-Gurion University of Negev  was supported by a grant from the Finnish Academy of Science and Letters, Vilho, Yrj\"o and Kalle V\"ais\"al\"a Foundation, is grateful for the hospitality given by the Department of Mathematics of the Ben-Gurion University of the Negev. The third author
was supported by the Ministry of Science and Higher Education of Russia (agreement No. 075-02-2020-1479/1).

\vskip 0.3cm

\vskip 0.3cm

Department of Mathematics, Ben-Gurion University of the Negev, P.O.Box 653, Beer Sheva, 8410501, Israel

\emph{E-mail address:} \email{vladimir@math.bgu.ac.il} \\

Department of Mathematics and Statistics, Gustaf H\"allstr\"omin katu 2 b, FI-00014 University of Helsinki, Finland

\emph{E-mail address:} \email{ritva.hurri-syrjanen@helsinki.fi} \\

 Division for Mathematics and Computer Sciences, Tomsk Polytechnic University, 634050 Tomsk, Lenin Ave. 30, Russia;
 Regional Scientific and Educational Mathematical Center, Tomsk State University, 634050 Tomsk, Lenin Ave. 36, Russia
							
\emph{E-mail address:} \email{vpchelintsev@vtomske.ru}   \\
			
Department of Mathematics, Ben-Gurion University of the Negev, P.O.Box 653, Beer Sheva, 8410501, Israel
							
\emph{E-mail address:} \email{ukhlov@math.bgu.ac.il

\end{document}